\newcommand{\E}{\mathbb{E}}
\title{Enhanced convergence rates of Adaptive Importance Sampling with recycling schemes via quasi-Monte Carlo methods\thanks{Submitted to the editors DATE.
		\funding{This work of the first and third authors were funded by the National Science Foundation of China (No. 720711119). The fourth author was funded by the Guangdong Basic and Applied Basic Research Foundation (No. 2024A1515011876 and No. 2025A1515011888). }}
  }
\author{Jianlong Chen\thanks{Department of Mathematical Sciences, Tsinghua University, Beijing 100084, People's Republic of China (\email{chen-jl22@mails.tsinghua.edu.cn}).} \and Jiarui Du\thanks{Corresponding author. School of Mathematics, South China University of Technology, Guangzhou 510641, People's Repulic of China (\email{mascut.dujiarui@mail.scut.edu.cn}).} \and
	 Xiaoqun Wang\thanks{Department of Mathematical Sciences, Tsinghua University, Beijing 100084, People's Republic of China (\email{wangxiaoqun@mail.tsinghua.edu.cn}).} \and Zhijian He\thanks{School of Mathematics, South China University of Technology, Guangzhou 510641, People's Repulic of China (\email{hezhijian@scut.edu.cn}).}  }
\begin{document}

\maketitle

\begin{abstract}
This article investigates the integration of quasi-Monte Carlo (QMC) methods using Adaptive Multiple Importance Sampling (AMIS). Traditional Importance Sampling (IS) often suffers from poor performance since it heavily relies on the choice of the proposal distributions. The AMIS and the Modified version of AMIS (MAMIS) address this by iteratively refining proposal distributions and reusing all past samples through a recycling strategy. We introduce randomized QMC (RQMC) methods into the MAMIS, achieving higher convergence rates compared to Monte Carlo (MC) methods. Our main contributions include a detailed convergence analysis of the MAMIS estimator under the RQMC setting. Specifically, we establish the $L^q$ $(q \geq 2)$ error bound for the RQMC-based estimator using a smoothed projection method, which enables us to apply the H\"older's inequality in the error analysis of the RQMC-based MAMIS estimator. As a result, we prove that the root mean square error of the RQMC-based MAMIS estimator converges at a rate of $\mathcal{O}(\bar{N}_T^{-1+\epsilon})$, where $\bar{N}_T$ is the average number of samples used in each step over $T$ iterations and $\epsilon > 0$ is arbitrarily small. Numerical experiments validate the effectiveness of our method, including mixtures of Gaussians, a banana-shaped model, and Bayesian Logistic regression.
\end{abstract}

\begin{keywords}
importance sampling, adaptive algorithms, quasi-Monte Carlo, growth condition 
\end{keywords}

\begin{MSCcodes}
41A63, 65D30, 97N40
\end{MSCcodes}

\section{Introduction}
Importance sampling (IS) is a fundamental method in the Monte Carlo (MC) methods, widely utilized to estimate expectations with respect to a target distribution that is difficult to sample directly; see \cite{Glasserman2004}. The fundamental principle of IS involves sampling from a proposal distribution and reweighting them according to the ratio of the target and proposal densities. However, the performance of IS is highly dependent on the choice of the proposal distribution. When the proposal distribution poorly approximates the target distribution, the accuracy and stability of the estimation will be compromised. This limitation has motivated the development of Adaptive Importance Sampling (AIS) as proposed by Douc et al. \cite{Douc2007} or Capp\'{e} et al. \cite{Cappe2008}, which iteratively refines the proposal distribution based on the information collected from the previous samples. By adaptively adjusting the proposal distribution to better fit the target, AIS can improve the overall efficiency of the sampling process.

Based on the foundation of AIS, the Adaptive Multiple Importance Sampling (AMIS) method proposed by Cornuet et al. \cite{Cornuet2012} further advances the methodology by incorporating a recycling strategy. Unlike the traditional AIS, which discards previous samples once the proposal distribution is updated, the AMIS retains and reweights all the past samples in each iteration. The recycling mechanism of AMIS allows for a more comprehensive exploration of the target distribution and effectively utilizes the entire history of the sampled data, leading to a more stable and accurate estimation of the expectation with respect to the target distribution.

Nevertheless, the AMIS method of \cite{Cornuet2012} lacks theoretical convergence results due to the complexity of its algorithm. Marin et al. \cite{Marin2019} proposed a Modified version of AMIS (MAMIS) by introducing a simpler recycling strategy than the AMIS, and established the consistency of MAMIS. In \cite{Bugallo2015,Cameron2014,Martino2017LAIS,Schuster2015ConsistencyImportancSamplingEstimates,Schuster2015GradientImportanceSampling}, Marin et al. found that the performance of MAMIS is similar to that of AMIS. Consequently, this paper will focus on the MAMIS algorithm.

Moreover, previous work has primarily focused on proving the convergence of AMIS under MC setting, with notable contributions from Douc et al. \cite{Douc2007} and Marin et al. \cite{Marin2019}. Douc et al. \cite{Douc2007} proved the consistency of the Adaptive Population Monte Carlo (APMC) schemes which is similar to the AMIS. Marin et al. \cite{Marin2019} proposed the MAMIS algorithm and demonstrated its consistency. However, in the experiments, they were both limited by the relatively slow convergence rate similar to that of MC.  

Quasi-Monte Carlo (QMC) is an efficient quadrature method, possessing a higher convergence order relative to MC in the computation of integrals; see \cite{Caflisch1998,bookdick2010,Niederreiter1992,owenqmc}. Numerous studies have been conducted on the application of QMC in IS, including \cite{Chopin2016,Dick2019,He2023}, all of which have achieved higher convergence rates. In addition, note that AIS can be seen as a particular case of the Population Monte Carlo (PMC) of \cite{Cappe2004PMC}. Huang et al. \cite{Huang2022PQMC} applied the QMC methods to the PMC, called PQMC. The numerical results demonstrate a significant improvement of PQMC over PMC. Based on these advancements, we believe that the QMC methods may also bring a significant improvement to the AMIS. 

Despite these advancements, it should be noted that both the integration of QMC with AIS and PQMC lack the corresponding theoretical analysis. The convergence analysis of AMIS under QMC setting remains an open question. This paper addresses this gap by introducing the randomized QMC (RQMC) methods into the MAMIS and providing a detailed analysis of the corresponding convergence order. Notably, the consistency of the APMC in \cite{Douc2007} is proven under the assumption that the number of iteration $T$ is fixed and that the number of simulations within each iteration, $N = N_1 = N_2 = \cdots = N_T$, tends to infinity. The consistency of MAMIS in \cite{Marin2019} holds when $N_1,N_2,\ldots,N_T$ is a growing but fixed sequence and $T$ tends to infinity. The convergence rate result in this paper is with respect to  $\bar{N}_T = \frac{\Omega_T}{T}$, where $\Omega_T$ is equal to $N_1+\cdots+N_T$. More precisely, we prove that the root mean square error (RMSE) rate of RQMC-based MAMIS estimator can reach $\mathcal{O}(\bar{N}_T^{-1+\epsilon})$ for an arbitrary small $\epsilon>0$. Therefore, both the previous two cases of $N_1,\ldots,N_T$ can be included in our result. 

The contributions of this paper are threefold. First, we establish the higher-order \(L^q\) \((q \geq 2)\) error bound for RQMC-based estimators, extending the existing error analysis beyond the commonly studied \(L^1\) and \(L^2\) cases. This advancement provides a more powerful tool for error analysis under the RQMC setting. Second, we introduce RQMC methods into the MAMIS framework and analyze the RMSE of MAMIS estimator based on our \(L^q\) error result, significantly enhancing the convergence rate for the RMSE of MAMIS estimator compared to traditional MC.  Finally, we establish convergence rate results under the MC setting, complementing previous work \cite{Marin2019} that focused only on consistency. Our numerical experiments validate the effectiveness of our approach in achieving faster convergence.

The paper is organized as follows. In \cref{sec:preliminaries}, we provide the necessary preliminaries of the (R)QMC methods, including our $L^q$ error result, and review the AMIS and MAMIS algorithm. Then we presents our main contributions in \cref{sec:main}, including a detailed convergence analysis of the MAMIS estimator under RQMC sampling. In \cref{sec:MC_convergence}, we establish the convergence rate results under the MC setting. In \cref{sec:experiments} , we validate the effectiveness of our method through numerical experiments, including mixtures of Gaussians, a banana-shaped model, and Bayesian Logistic regression. Finally, we conclude the paper with a summary of our findings and potential future research directions in \cref{sec:conclusions}.

\section{Preliminaries}\label{sec:preliminaries}
\subsection{The modified version of AMIS}\label{subsec:MAMIS}
Importance sampling is a variance reduction technique in the MC methods used to approximate integrals with respect to a target distribution $\Pi$ using samples drawn from a proposal distribution $Q$. More formally, when $\Pi$ has density $\pi(x)$ with respect to the Lebesgue measure $\mathrm{d}\boldsymbol{x}$,  the goal is to estimate the integral
\begin{equation}
I := \int_{\mathcal{X}} \psi(\boldsymbol{x}) \pi(\boldsymbol{x}) \mathrm{d} \boldsymbol{x}, \label{term:integral}
\end{equation}
where $\mathcal{X} \subseteq \mathbb{R}^s$ and $\psi : \mathcal{X} \to \mathbb{R}$ is a function of interest. However, drawing samples from $\Pi$ is often difficult. IS overcomes this by drawing samples $X_1, \ldots, X_N$ from a carefully chosen proposal distribution $Q$ and weighting them by the Radon-Nikodym derivative
\[
\omega_i = \frac{\pi(X_i)}{q(X_i)},
\]
where $q(x)$ is the density of $Q$ with respect to the Lebesgue measure.
The IS estimator of the integral is then
\[
\widehat{\Pi}_N^{\text{IS}}(\psi) = \frac{1}{N} \sum_{i=1}^N \omega_i \psi(X_i).
\]

The efficiency of IS depends critically on the choice of the proposal distribution $Q$. If $Q$ is not well-tuned, the performance of IS may be poor. Adaptive Importance Sampling (AIS) addresses this challenge by iteratively updating $Q$ based on the past samples; see \cite{Cappe2008, Douc2007}. Formally, we set a well-chosen parametric family of the proposal distributions, denoted by  $\{Q(\theta)\}_{\theta\in \Theta}$, where $\Theta \subseteq \mathbb{R}^{D}$ (note that, $D$ do not need to be equal to $s$) and $Q(\theta)$ has density $q(\boldsymbol{x},\theta)$ with respect to $\mathrm{d}\boldsymbol{x}$. According to a criterion such as Kullback-Leibler divergence, variance, or others, we will use the past samples to learn a better $\theta$ at stage $t$ of the AIS algorithm, say $\widehat{\theta}_t$ for $t>1$. We present the AIS algorithm in \cref{algorithm:AIS}, which has been considered by Capp\'e et al. \cite{Cappe2008} and Douc et al. \cite{Douc2007}. 

\begin{algorithm}
\caption{Adaptive Importance Sampling}
\begin{algorithmic}\label{algorithm:AIS}
\STATE \textbf{Input:} an initial parameter $\widehat{\theta}_1$, proposal distribution family $\{Q(\theta)\}$, and sample sizes $N_1, \ldots, N_T$.
\FOR{$t = 1 \to T$}
    \IF{$t>1$}
    \STATE choose the proposal distribution $Q(\widehat{\theta}_t)$ by learning $\widehat{\theta}_t$ from the past samples
    \ENDIF
    \FOR{$i = 1 \to N_t$}
        \STATE draw $X_i^t$ from $Q(\widehat{\theta}_t)$
        \STATE compute $\omega_i^t = \frac{\pi(X_i^t)}{q(X_i^t,\widehat{\theta}_t)}$
    \ENDFOR
\ENDFOR
\RETURN the samples $(X_1^T, \ldots, X_{N_T}^T)$ with weights $(\omega_1^T, \ldots, \omega_{N_T}^T)$
\end{algorithmic}
\end{algorithm}

Utilizing the samples $X_i^T$ with weights $\omega_i^T$ obtained at the final stage $T$ in \cref{algorithm:AIS}, the AIS estimator is formulated as
\begin{equation}
\widehat{\Pi}_T^{\text{AIS}}(\psi) = \frac{1}{N_T} \sum_{i=1}^{N_T} \omega_i^T \psi(X_i^T). \notag   
\end{equation}
However, during the entire algorithm, a total of $\Omega_T = N_1+ \cdots + N_T$ samples are generated, this estimator only uses the samples at the stage $T$, and all the samples before stage $T$ are wasted. To address this issue, inspired by the deterministic multiple mixture (see \cite{Owen2000} and \cite{Veach1995}), Cornuet et al. \cite{Cornuet2012} proposed the Adaptive Multiple Importance
Sampling (AMIS), which adds a recycling strategy by reweighting all past samples at each stage $t$ in the AIS framework, and proposed using the reweighted past samples to update $\theta$. Formally, this recycling strategy is reweighting all the samples $X_i^s$ $(s\leq t)$ with weights 
\begin{equation}
\widetilde{\omega}_{i}^s = \frac{\pi(X_i^s)}{\Omega_t^{-1}\sum_{l=1}^t N_l q(X_i^s, \theta_l)} \notag
\end{equation}
after each stage $t$ in the AIS framework, and finally using 
\begin{equation}
\widehat{\Pi}_{T}^{\mathrm{AMIS}}(\psi)=\frac{1}{\Omega_{T}}\sum_{t=1}^{T}\sum_{i=1}^{N_{t}}\widetilde{\omega}_{i}^t\psi(X_{i}^{t}) \notag
\end{equation}
as the estimator after stage $T$.

Marin et al. \cite{Marin2019} also emphasized this recycling strategy, but instead of reweighting the samples at each stage $t$ to update the parameters, a simpler and more analytically tractable method was suggested. The resulting Modified version of AMIS algorithm is exposed in \cref{algorithm:MAMIS-unnormalized}. We use the assumption in \cite{Marin2019}, that is, under a criterion such as Kullback-Leibler divergence, variance, or others, the parameter $\theta^{\ast}$ which we desire $\widehat{\theta}_t$ to approximate, can be written as
\begin{equation}
\theta^{\ast} = \int_{\mathcal{X}} h(x)\pi(x) \mathrm{d}\boldsymbol{x}, \label{term:ThetastarForm}
\end{equation}
where $h : \mathbb{R}^s \to \mathbb{R}^D $ is an explicitly known function. As a result, we can iterate $\widehat{\theta}_t$ by
\begin{align}
\widehat{\theta}_{t+1} = \frac{1}{N_t} \sum_{i=1}^{N_t} \frac{\pi(X_i^t)}{q(X_i^t, \widehat{\theta}_t)} h(X_i^t),
\end{align}
in which the samples $\{X_i^t\}_{i=1}^{N_t}$ are drawn from the proposal $Q(\widehat{\theta}_t)$. Then we use the estimator
\begin{equation}\label{term:MAMISEsimator}
\widehat{\Pi}_{T}^{\text{MAMIS}}(\psi)=\frac{1}{\Omega_{T}}\sum_{t=1}^{T}\sum_{i=1}^{N_{t}}\left[\frac{\pi(X_{i}^{t})}{\Omega_{T}^{-1}\sum_{l=1}^{T}N_{l}q(X_{i}^{t},\widehat{\theta_{l}})}\right]\psi(X_{i}^{t})
\end{equation}
to estimate the integral \cref{term:integral}, where $\Omega_T = N_1 + \cdots + N_T$.

\begin{algorithm}
\caption{Modified AMIS (MAMIS)}
\begin{algorithmic}\label{algorithm:MAMIS-unnormalized}
\STATE \textbf{Input:} an initial parameter $\widehat{\theta}_1$, and sample sizes $N_1, \ldots, N_T$.
\FOR{$t = 1 \to T$}
    \FOR{$i = 1 \to N_t$}
        \STATE \textbf{draw} $X_i^t$ from $Q(\widehat{\theta}_t)$
        \STATE \textbf{compute} $\omega_i^t = \frac{\pi(X_i^t)}{q(X_i^t, \widehat{\theta}_t)}$
    \ENDFOR
    \STATE compute $\widehat{\theta}_{t+1} = N_t^{-1} \sum_{i=1}^{N_t} \omega_i^t h(X_i^t)$
\ENDFOR
\STATE \textbf{set} $\Omega_T = N_1 + \cdots + N_T$
\FOR{$t = 1 \to T$}
    \FOR{$i = 1 \to N_t$}
        \STATE \textbf{update} $\omega_i^t = \frac{\pi(X_i^t)}{{\Omega_{T}^{-1} \sum_{l=1}^T N_l q(X_i^t, \widehat{\theta}_l)}}$
    \ENDFOR
\ENDFOR
\RETURN $(X_1^1, \omega_1^1), \ldots, (X_{N_1}^1, \omega_{N_1}^1), \ldots, (X_1^T, \omega_1^T), \ldots (X_{N_T}^T, \omega_{N_T}^T).$
\end{algorithmic}
\end{algorithm}

Marin et al. \cite{Marin2019} proved the consistency of the MAMIS estimator as \( T \to \infty \), but did not provide the convergence rate with respect to the sample size. In the experiments related to AMIS \cite{Bugallo2015}, it can be observed that when \( N_1 = \cdots = N_T = N \), the RMSE convergence rate of AMIS approaches \( \mathcal{O}(N^{-1/2}) \). To improve the convergence rate of MAMIS, we introduce RQMC methods and rigorously analyze the convergence rate of the MAMIS estimator under the RQMC setting. Within our theoretical framework, we can also provide the convergence rate of MAMIS under MC.

Below, we follow the analysis framework of MAMIS and combine the results of RQMC obtained by Ouyang et al. \cite{Ouyang2024} to analyze the MAMIS estimator under RQMC setting.

Since \(\hat{\theta}_t\) approximates \({\theta}^{\ast}\), intuitively, the denominator term $\Omega_{T}^{-1}\sum_{l=1}^{T}N_{l}q(X_{i}^{t},\widehat{\theta_{l}})$ in the MAMIS estimator will approximate \(\Omega_T^{-1} \sum_{k=1}^{T} N_k q(X_i^t, \theta^*) \), which is equal to \(q(X_i^t, \theta^*)\). Hence, we introduce the auxiliary estimator
\begin{equation}
\widehat{\Pi}_{T}^{\ast}(\psi)  :=  \frac{1}{\Omega_T}\sum_{t=1}^{T} \sum_{i=1}^{N_t} \frac{\pi(X_{i}^{t})}{q(X_{i}^{t},\theta^{\ast})}\psi(X_{i}^{t}), \label{def:Pi_T^{star}}
\end{equation}
then the total error of MAMIS estimator can be decomposed into two parts
\begin{equation}
|\widehat{\Pi}_T^{\mathrm{MAMIS}}(\psi) -  I| \leq |\widehat{\Pi}_T^{\mathrm{MAMIS}}(\psi) -  \widehat{\Pi}_{T}^{\ast}(\psi)| + |\widehat{\Pi}_{T}^{\ast}(\psi) -  I|.
\end{equation}
To analyze the second part, we write $\widehat{\Pi}_{T}^{\ast}(\psi)$ as 
\begin{equation}\notag
\widehat{\Pi}_{T}^{\ast}(\psi) = \sum_{t=1}^{T} \frac{N_t}{\Omega_T} \cdot \frac{1}{N_t}\sum_{i=1}^{N_t} \frac{\pi(X_{i}^{t})}{q(X_{i}^{t},\theta^{\ast})}\psi(X_{i}^{t}),
\end{equation}
which is a weighted average of $\widehat{\pi}_{t}^{\ast}(\psi)$ given by
\begin{equation}
\widehat{\pi}_{t}^{\ast}(\psi)  :=  \frac{1}{N_t} \sum_{i=1}^{N_t} \frac{\pi(X_{i}^{t})}{q(X_{i}^{t},\theta^{\ast})}\psi(X_{i}^{t}). \notag
\end{equation}
Note that $\widehat{\pi}_{t}^{\ast}(\psi)$ serves as an estimator for the integral
\begin{equation}
\int_{\mathcal{X}} \left[ \frac{\pi(\boldsymbol{x})\psi(\boldsymbol{x})}{q(\boldsymbol{x},\theta^{\ast})}q(\boldsymbol{x},\widehat{\theta}_t)\right] \mathrm{d}\boldsymbol{x}, \notag
\end{equation}
thus we denote 
\begin{equation}
I_{\psi}^{\ast}(\theta) := \int_{\mathcal{X}} \left[ \frac{\pi(\boldsymbol{x})\psi(\boldsymbol{x})}{q(\boldsymbol{x},\theta^{\ast})}q(\boldsymbol{x}, \theta) \right] \mathrm{d}\boldsymbol{x}. \label{def:OperatorI_Psi}
\end{equation}
The integral $I$ is actually equal to $I_{\psi}^{\ast}({\theta}^{\ast})$, then we have
\begin{align}
\left|\widehat{\Pi}_{T}^{\ast}(\psi) -  I\right| 
& = \left|\sum_{t=1}^{T} \frac{N_t}{\Omega_T}(\widehat{\pi}_{t}^{\ast}(\psi) - I)\right|\notag\\
& \leq \sum_{t=1}^{T} \frac{N_t}{\Omega_T}\left|\widehat{\pi}_{t}^{\ast}(\psi) - I\right|\notag\\
& \leq \sum_{t=1}^{T} \frac{N_t}{\Omega_T}\Big[\left|\widehat{\pi}_{t}^{\ast}(\psi) - I_{\psi}^{\ast}(\widehat{\theta}_t)\right| + \left|I_{\psi}^{\ast}(\widehat{\theta}_t)- I\right|\Big]\notag\\
& = \sum_{t=1}^{T} \frac{N_t}{\Omega_T}\Big[\left|\widehat{\pi}_{t}^{\ast}(\psi) - I_{\psi}^{\ast}(\widehat{\theta}_t)\right| + \left|I_{\psi}^{\ast}(\widehat{\theta}_t)- I_{\psi}^{\ast}(\theta^{\ast})\right|\Big],\notag
\end{align}
implying that the total error can be decomposed as
\begin{align}
 \left|\widehat{\Pi}_T^{\mathrm{MAMIS}}(\psi) -  I\right| 
 \leq &\left|\widehat{\Pi}_T^{\mathrm{MAMIS}}(\psi) -  \widehat{\Pi}_{T}^{\ast}(\psi)\right|  \notag\\
 + & \sum_{t=1}^{T} \frac{N_t}{\Omega_T}\Big[\left|\widehat{\pi}_{t}^{\ast}(\psi) - I_{\psi}^{\ast}(\widehat{\theta}_t)\right| + \left|I_{\psi}^{\ast}(\widehat{\theta}_t)- I_{\psi}^{\ast}(\theta^{\ast})\right|\Big]. \notag
\end{align}
Consequently, the error analysis of the MAMIS estimator attributes to three terms as
\begin{align}
&|\widehat{\Pi}_{T}^{\mathrm{MAMIS}}(\psi) - \widehat{\Pi}_{T}^{\ast}(\psi)|, \notag\\ 
&|\widehat{\pi}_{t}^{\ast}(\psi) - I_{\psi}^{\ast}(\widehat{\theta}_{t})|, \notag\\ 
&|I_{\psi}^{\ast}(\widehat{\theta}_{t}) - I_{\psi}^{\ast}(\theta^{\ast})|. \notag 
\end{align}
In our theoretical analysis, these terms are based on the parameter error
\begin{align}
\lVert\widehat{\theta}_t - \theta^{\ast}\rVert, \notag 
\end{align}
where $\lVert\cdot\rVert$ is the Euclidean norm. When applying RQMC methods, to analyze these errors, we need the growth condition that will be introduced later. 

\subsection{Quasi-Monte Carlo methods}\label{subsec:qmc methods}
QMC methods are quadrature rules used for numerical integration of functions on the unit hypercube $[0,1)^d$. The QMC methods differ from the MC methods by using deterministic low-discrepancy sequences rather than random points. The Koksma-Hlawka inequality \cite{Niederreiter1992} provides a QMC error bound as
\begin{equation}\notag
    \left|\frac{1}{n}\sum_{j = 1}^n f(\boldsymbol{u}_j) - \int _{[0,1)^d} f(\boldsymbol{u}) d\boldsymbol{u} \right| \le V_{\mathrm{HK}}(f) D^*_n\left(\{\boldsymbol{u}_1,\dots,\boldsymbol{u}_n\}\right),
\end{equation}
where $D^*_n\left(\{\boldsymbol{u}_1,\dots,\boldsymbol{u}_n\}\right)$ is the star discrepancy of the point set $\{\boldsymbol{u}_1,\dots,\boldsymbol{u}_n\}$ and $V_{\mathrm{HK}}(f)$ is the variation of $f$ in the sense of Hardy and Krause.

In order to facilitate the error estimation, RQMC is typically used, including scrambled nets, random shift lattice and so on (see \cite{owenqmc}). It involves suitably randomizing the points \({\boldsymbol{u}}_i\) while still retaining their low-discrepancy property.

\subsection{Growth condition}\label{subsec:growth condition}
To derive the convergence rate of RQMC methods, Owen \cite{Owen2006} introduced the growth condition for unbounded function defined on $[0,1)^d$ and obtained the $L^1$ error rate based on the low variation extension strategy. Based on this method, He et al. \cite{He2023} obtained the $L^2$ error rate for the same growth condition when using scrambled nets.  

Recently, Ouyang et al. \cite{Ouyang2024} refined the growth condition \cref{def:GrowthCondition} below for the unbounded functions defined on $\mathbb{R}^d$ and obtained the convergence rate based on a different method, called the projection method. Formally, to estimate the expectation $\mathbb{E}[h(X)]$, where $X$ is a standard normal random vector, we define 
\begin{equation}
\widehat{I}_n(f) := \frac{1}{n} \sum_{j=1}^{n} f \circ \Phi^{-1}(y_j) \label{term:RQMCEstimator_ToIntroducePQMC}
\end{equation}
for $f: \mathbb{R}^d \to \mathbb{R}, $ where$\{y_j\}_{j=1}^n$ is a point set in $[0,1)^d$ and $\Phi(x)$ is the cumulative distribution function (CDF) of $\mathcal{N}(0,1)$ and $\Phi^{-1}(\boldsymbol{y})$ is the inverse of $\Phi$ acting on $\boldsymbol{y}$ component-wise. We also use $\Phi(\boldsymbol{y})$ to simplify the notation, meaning that $\Phi$ acts on $\boldsymbol{y}$ component-wise. 

Denote $1:d = \{1,2,\ldots,d\}$ and define the function class (for $M,B,k>0$) as 
\begin{equation}\label{def:GrowthCondition}
 G_{e}(M, B, k) := \left\{ f \in \mathcal{S}^{d}(\mathbb{R}^{d}) : \sup_{u \subseteq 1:d} \left| \partial^{u} f(\boldsymbol{x}) \right| \leq B e^{M|\boldsymbol{x}|^{k}} \text{ for all } \boldsymbol{x} \in \mathbb{R}^{d} \right\},    
\end{equation}
 where $\partial^{u}f(x)$ denotes the mixed partial derivative of $f(x)$ with respect to $x_j$ with $j \in u$ and $f \in \mathcal{S}^{d}(\mathbb{R}^{d})$ means that  $\partial^{u}f$ is continuous for any $u \subseteq 1:d$. Using $\mathcal{U}[0,1)^d$ to denote the uniform distribution on $[0,1)^d$, Ouyang et al. \cite{Ouyang2024} established the following result.
\begin{lemma}\label{lemma:L2-convergence}
Let $\left\{y_{1},\ldots, y_{n}\right\}$ be an RQMC point set used in the estimator $\widehat{I}_{n}(f)$ given by \cref{term:RQMCEstimator_ToIntroducePQMC} such that each $y_{j}\sim \mathcal{U}[0,1)^d$ and the star discrepancy of $\left\{y_{1},\ldots, y_{n}\right\}$ satisfies
\begin{equation}
D_{n}^{*}\left(\left\{y_{1},\ldots, y_{n}\right\}\right)\leq C\frac{(\log n)^{d-1}}{n} \quad a.s., \notag   
\end{equation}
where $C$ is a constant independent of $n$.
For the function class $G_{e}(M, B, k)$ with order $0<k<2$, we have
\begin{equation}
\sup_{f \in G_e(M,B,k)} \mathbb{E}\left[\left(\widehat{I}_{n}(f)-\mathbb{E}[f(X)]\right)^{2}\right]=\mathcal{O}\left(n^{-2}(\log n)^{3 d-2}\exp\left\{2 M(8 d\log n)^{\frac{k}{2}}\right\}\right). \notag 
\end{equation}
\end{lemma}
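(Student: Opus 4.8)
The plan is to reduce the statement to a quasi-Monte Carlo integration error on the unit cube and then tame the unboundedness of the transformed integrand by a \emph{projection} onto a shrinking interior box. Write $g := f\circ\Phi^{-1}:[0,1)^{d}\to\mathbb{R}$; by the change of variables $\boldsymbol{u}=\Phi(\boldsymbol{x})$, $\mathrm{d}\boldsymbol{u}=\phi_{d}(\boldsymbol{x})\,\mathrm{d}\boldsymbol{x}$ (with $\phi_{d}$ the standard $d$-dimensional Gaussian density) one has $\mathbb{E}[f(X)]=\int_{[0,1)^{d}}g(\boldsymbol{u})\,\mathrm{d}\boldsymbol{u}$, so that $\widehat{I}_{n}(f)-\mathbb{E}[f(X)]$ is exactly the QMC error for integrating $g$ over $[0,1)^{d}$. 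The obstruction is that $g$ blows up near $\partial[0,1)^{d}$, so Koksma--Hlawka cannot be applied to $g$ directly.

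To circumvent this, fix a scale $\delta=\delta_{n}\in(0,1/2)$, let $K_{\delta}:=[\delta,1-\delta]^{d}$, and let $g^{*}$ be the projection of $g$ onto $K_{\delta}$, i.e.\ $g^{*}(\boldsymbol{y}):=g\bigl(\pi_{K_{\delta}}(\boldsymbol{y})\bigr)$ where $\pi_{K_{\delta}}$ clamps each coordinate to $[\delta,1-\delta]$. Then $g^{*}=g$ on $K_{\delta}$, $g^{*}$ is bounded, and --- the key structural fact --- its Hardy--Krause variation equals a sum of face-wise Vitali variations of $g$ restricted to $K_{\delta}$. Decomposing $\widehat{I}_{n}(f)-\mathbb{E}[f(X)]=A_{n}+B_{n}+C_{n}$ with
\[
A_{n}:=\tfrac{1}{n}\sum_{j=1}^{n}\bigl(g(\boldsymbol{y}_{j})-g^{*}(\boldsymbol{y}_{j})\bigr),\quad B_{n}:=\tfrac{1}{n}\sum_{j=1}^{n}g^{*}(\boldsymbol{y}_{j})-\int_{[0,1)^{d}}g^{*}\,\mathrm{d}\boldsymbol{u},\quad C_{n}:=\int_{[0,1)^{d}}\bigl(g^{*}-g\bigr)\,\mathrm{d}\boldsymbol{u},
\]
one gets $\mathbb{E}\bigl[(\widehat{I}_{n}(f)-\mathbb{E}[f(X)])^{2}\bigr]\le 3\bigl(\mathbb{E}[A_{n}^{2}]+\mathbb{E}[B_{n}^{2}]+C_{n}^{2}\bigr)$, and it remains to bound the three pieces and optimize $\delta$.

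For $B_{n}$, Koksma--Hlawka and the assumed a.s.\ discrepancy bound give $\mathbb{E}[B_{n}^{2}]\le C^{2}n^{-2}(\log n)^{2d-2}\,V_{\mathrm{HK}}(g^{*})^{2}$. To bound $V_{\mathrm{HK}}(g^{*})$ one uses that $\Phi^{-1}$ is applied coordinatewise, so the mixed partial $\partial^{u}g$ has no cross terms and factorizes as $\partial^{u}g(\boldsymbol{y})=(\partial^{u}f)\bigl(\Phi^{-1}(\boldsymbol{y})\bigr)\prod_{j\in u}\frac{1}{\phi(\Phi^{-1}(y_{j}))}$ for any $u\subseteq 1:d$. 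Combining this with the growth condition $|\partial^{u}f(\boldsymbol{x})|\le Be^{M|\boldsymbol{x}|^{k}}$, the bound $|\Phi^{-1}(\boldsymbol{y})|^{k}\le(2d\log(1/\delta))^{k/2}$ on $K_{\delta}$ (from $\Phi^{-1}(1-\delta)\le\sqrt{2\log(1/\delta)}$), and $\int_{\delta}^{1-\delta}\frac{\mathrm{d}y}{\phi(\Phi^{-1}(y))}=\Phi^{-1}(1-\delta)-\Phi^{-1}(\delta)\le 2\sqrt{2\log(1/\delta)}$, each Vitali variation is at most $Be^{M(2d\log(1/\delta))^{k/2}}\bigl(2\sqrt{2\log(1/\delta)}\bigr)^{|u|}$, hence $V_{\mathrm{HK}}(g^{*})\lesssim_{d}Be^{M(2d\log(1/\delta))^{k/2}}(\log(1/\delta))^{d/2}$. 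For $A_{n}$ and $C_{n}$, note $g-g^{*}$ vanishes off the shell $[0,1)^{d}\setminus K_{\delta}$ of volume $\le 2d\delta$; since each $\boldsymbol{y}_{j}\sim\mathcal{U}[0,1)^{d}$, Jensen gives $\mathbb{E}[A_{n}^{2}]\le\int_{[0,1)^{d}}(g-g^{*})^{2}\le 2\int_{[0,1)^{d}\setminus K_{\delta}}g^{2}+2\int_{[0,1)^{d}\setminus K_{\delta}}(g^{*})^{2}$, where the $(g^{*})^{2}$-term is $\le 4d\delta\sup_{K_{\delta}}g^{2}\lesssim d\delta B^{2}e^{2M(2d\log(1/\delta))^{k/2}}$ and the $g^{2}$-term, after $\boldsymbol{x}=\Phi^{-1}(\boldsymbol{u})$, equals $\int_{\|\boldsymbol{x}\|_{\infty}>\Phi^{-1}(1-\delta)}f(\boldsymbol{x})^{2}\phi_{d}(\boldsymbol{x})\,\mathrm{d}\boldsymbol{x}$, a convergent Gaussian tail integral precisely because $k<2$ (using $e^{2M|\boldsymbol{x}|^{k}}\phi_{d}(\boldsymbol{x})\lesssim e^{-|\boldsymbol{x}|^{2}/4}$ and the tail of $\Phi$, it is $\mathcal{O}(\delta^{1/2}\,\mathrm{poly}(\log(1/\delta)))$); the same estimate with $M$ in place of $2M$ handles $C_{n}$. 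Finally, choosing $\delta=n^{-4}$ makes $2d\log(1/\delta)=8d\log n$, so $\mathbb{E}[B_{n}^{2}]=\mathcal{O}\bigl(n^{-2}(\log n)^{3d-2}\exp\{2M(8d\log n)^{k/2}\}\bigr)$, and this dominates $\mathbb{E}[A_{n}^{2}]$ and $C_{n}^{2}$ (whose exponential factors are either absent or carry the much smaller prefactor $n^{-4}$); collecting the bounds yields the claim, uniformly over $f\in G_{e}(M,B,k)$ since $B$ enters only through the explicit factor $B^{2}$.

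The hard part is the variation estimate for $g^{*}$: one must verify that the projection extension truly has Hardy--Krause variation controlled by the face-wise Vitali variations of $g$ on $K_{\delta}$ with an anchor lying inside $K_{\delta}$ (so the growth bound stays uniform), and then carefully balance the factor $e^{M|\boldsymbol{x}|^{k}}$ frozen at the box scale $\sqrt{2d\log(1/\delta)}$ against the Jacobian blow-up $1/\phi(\Phi^{-1}(\cdot))$. It is exactly the restriction $k<2$ that keeps both the variation of $g^{*}$ and the Gaussian tail integrals finite --- the method fails for $k\ge 2$ --- and the optimization of $\delta$ is what produces the explicit constant $8d$ together with the power $(\log n)^{3d-2}$.
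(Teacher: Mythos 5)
Your argument is correct, and it reaches exactly the bound claimed, but it follows a route that differs in a meaningful way from the one this paper relies on. The paper does not prove \cref{lemma:L2-convergence} directly: it cites Corollary 4.8 of Ouyang et al., and its own appendix proof of the $L^q$ generalization (\cref{thm:Lq-convergence}, which contains the $q=2$ case) uses the \emph{smoothed} projection $P_R$ acting in the Gaussian $\boldsymbol{x}$-space. The smoothing is there precisely so that $f\circ P_R$ stays in $\mathcal{S}^d(\mathbb{R}^d)$ and the Hardy--Krause variation of $f\circ P_R\circ\Phi^{-1}$ can be bounded through the mixed-derivative formula (Lemma 4.5 of Ouyang et al.), after which the unprojected pieces are handled by polar-coordinate Gaussian tail estimates and $R\approx\sqrt{4q\log n}$ is tuned to give the constant $8d$ at $q=2$. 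You instead clamp hard on the unit cube, onto $K_\delta=[\delta,1-\delta]^d$, and control $V_{\mathrm{HK}}(g\circ\pi_{K_\delta})$ by the composition-with-coordinatewise-monotone-maps argument together with the factorization $\partial^u g(\boldsymbol{y})=(\partial^u f)(\Phi^{-1}(\boldsymbol{y}))\prod_{j\in u}\phi(\Phi^{-1}(y_j))^{-1}$, which is valid because each Jacobian factor depends only on its own coordinate; your choice $\delta=n^{-4}$ corresponds exactly to the paper's $R\approx\sqrt{8\log n}$ and reproduces the factor $\exp\{2M(8d\log n)^{k/2}\}$ and the power $(\log n)^{3d-2}$ ($(\log n)^{2d-2}$ from the squared discrepancy times $(\log n)^{d}$ from $V_{\mathrm{HK}}^2$). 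What each approach buys: the smoothed projection lets one quote a ready-made derivative-based variation bound with no discussion of clamped (nonsmooth) compositions, while your hard clamp avoids constructing and differentiating $P_R$ at the cost of the step you yourself flag as the hard part --- verifying that the Hardy--Krause variation (anchored at $\boldsymbol{1}$) of $g\circ\pi_{K_\delta}$ is at most the sum of face-wise Vitali variations of $g$ on $K_\delta$. That fact is true (alternating sums over any ladder of $[0,1]^d$ map to alternating sums over an induced ladder of $K_\delta$, degenerate boxes contributing zero, and the face restrictions at $y_j=1$ are again clamped restrictions of $g$ to the slice $x_j=1-\delta$), but in a full write-up you should prove it rather than assert it, since it is the only place where the nonsmoothness of the clamp could bite; everything else (Jensen with uniform marginals for the empirical difference term, the Young-inequality absorption $e^{2M|\boldsymbol{x}|^k}\phi_d(\boldsymbol{x})\lesssim e^{-|\boldsymbol{x}|^2/4}$ for $k<2$, and the domination of the $\delta$-shell terms by the Koksma--Hlawka term) is sound and uniform over $G_e(M,B,k)$.
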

\begin{proof}
See the proof of Corollary 4.8 in \cite{Ouyang2024}.
\end{proof}

\subsection{$L^q$ Convergence rate of RQMC methods}
\cref{lemma:L2-convergence} pertains to the $L^2$ error; in fact, we can prove that this projection method is effective for higher-order $L^q$ $(q \geq 2)$ error as well. 

The key point of obtaining \cref{lemma:L2-convergence} lies in the introduction of a smoothed projection operator $P_R$, where $R$ is the projection radius. Formally, for the one-dimensional case, we define the projection operator $\widehat{P}_R : \mathbb{R} \rightarrow \mathbb{R}$ as
\begin{equation}\notag
\widehat{P}_R(x) = 
\begin{cases} 
R, & x > R, \\
x, & x \in [-R, R], \\
-R, & x < -R.
\end{cases}
\end{equation}
and modify it by
\begin{equation}\notag
P_R(x) = 
\begin{cases} 
-R + \frac{1}{2}, & x \in (-\infty, -R], \\
\frac{1}{2}x^2 + Rx + \frac{(R-1)^2}{2}, & x \in (-R, -R + 1), \\
x, & x \in [-R + 1, R - 1], \\
-\frac{1}{2}x^2 + Rx - \frac{(R-1)^2}{2}, & x \in (R - 1, R), \\
R - \frac{1}{2}, & x \in [R, \infty).
\end{cases}
\end{equation}
for $R>1$. Then we denote $P_R(\boldsymbol{x}) = (P_R(x_1),\ldots,P_R(x_d))$ for $\boldsymbol{x} = (x_1,\ldots,x_d)$. 

Using this operator, we can decompose the error $|\widehat{I}_{n}(f)-\mathbb{E}[f(X)]|$ as 
\begin{align}
\Big|\widehat{I}_{n}(f)-\mathbb{E}[f(X)]\Big| & \leq
\Big|\widehat{I}_{n}(f) -\widehat{I}_{n}(f\circ P_R)\Big| \notag\\
&+ \Big|\widehat{I}_{n}(f\circ P_R)-\mathbb{E}[f\circ P_R(X)]\Big|+\Big|\mathbb{E}[f\circ P_R(X)]-\mathbb{E}[f(X)]\Big|. \notag
\end{align}
Thanks to the smoothing property of $P_R$, the Koksma-Hlawka inequality can be applied to the second part, and the projection properties of $P_R$ are used to address the first part and the third part. Finally, by selecting an appropriate $R$ for different sampling sizes $n$, the convergence rate of $\mathbb{E}\left[\left(\widehat{I}_{n}(f)-\mathbb{E}[f(X)]\right)^{2}\right]$ with respect to $n$ can be achieved.

We note that this idea can be applied to obtain the higher-order $L^q$ error rate. We state the generalized theorem as follows, with the proof provided in the appendix.
\begin{theorem}\label{thm:Lq-convergence}
Given $q \geq 2$, for the function class $G_{e}(M, B, k)$ with order $0<k<2$ and the RQMC point set $\left\{y_{1},\ldots, y_{n}\right\}$ satisfying the conditions in \cref{lemma:L2-convergence}, we have
\begin{equation}
\sup_{f \in G_e(M,B,k)} \E\left[\left\lvert \widehat{I}_{n}(f)-\E[f(X)]\right\lvert^{q}\right]=\mathcal{O}\left(n^{-q}(\log n)^{q\left(\frac{3 d}{2}-1\right)} \exp \left\{q M(4 q d \log n)^{\frac{k}{2}}\right\}\right),\notag 
\end{equation}
where the constant in the big-$\mathcal{O}$ bounds depends on $M,B,k,d,q$.
\end{theorem}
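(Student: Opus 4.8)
The plan is to follow the same three-part decomposition used to establish \cref{lemma:L2-convergence}, but to carry the exponent $q$ through every estimate rather than specializing to $q=2$. Writing
\[
\Big|\widehat{I}_{n}(f)-\E[f(X)]\Big| \leq
\underbrace{\Big|\widehat{I}_{n}(f) -\widehat{I}_{n}(f\circ P_R)\Big|}_{A_1}
+ \underbrace{\Big|\widehat{I}_{n}(f\circ P_R)-\E[f\circ P_R(X)]\Big|}_{A_2}
+ \underbrace{\Big|\E[f\circ P_R(X)]-\E[f(X)]\Big|}_{A_3},
\]
one has $\E[|\widehat I_n(f)-\E[f(X)]|^q]^{1/q}\le \E[A_1^q]^{1/q}+\E[A_2^q]^{1/q}+A_3$ by the triangle inequality in $L^q$ (Minkowski), so it suffices to bound each term to the $q$-th power. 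The strategy is to choose the projection radius $R=R_n$ growing like $\sqrt{c d \log n}$ for a suitable constant $c$ depending on $q$, so that the "tail" terms $A_1$ and $A_3$ decay polynomially in $n$ while the "discrepancy" term $A_2$ contributes the main $n^{-q}$ rate up to logarithmic and subexponential factors.

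For $A_2$ I would proceed exactly as in the $L^2$ case: since $P_R$ is $C^1$ with bounded derivatives and $f\in G_e(M,B,k)\subseteq \mathcal S^d(\R^d)$, the composition $f\circ P_R\circ\Phi^{-1}$ has bounded Hardy–Krause variation on $[0,1)^d$, controlled (via a Leibniz-type expansion of the mixed partials and the chain rule) by $B\exp\{M(\sqrt d\,R)^k\}$ times a polynomial in $R$; here one uses that on the support-relevant region $|P_R(\boldsymbol x)|\le \sqrt d\,R$. The Koksma–Hlawka inequality together with the assumed star-discrepancy bound $D_n^*\le C(\log n)^{d-1}/n$ then gives $A_2 \le \mathrm{(poly\ in\ }R)\cdot B e^{M(\sqrt d R)^k}\cdot C(\log n)^{d-1}/n$ almost surely, and since this bound is deterministic we may raise it to the $q$-th power directly — no extra work beyond the $L^2$ argument. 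For $A_3$, write $A_3 \le \E[|f(X)-f\circ P_R(X)|\mathbf 1\{|X|_\infty>R-1\}]$, bound $|f|$ and $|f\circ P_R|$ by $Be^{M|X|^k}$ and $Be^{M(\sqrt d R)^k}$ respectively on that event, and use a Gaussian tail estimate (with the subgaussian/Gaussian-integral bound $\E[e^{M|X|^k}\mathbf 1\{|X|_\infty>R-1\}]$ small when $k<2$) to get a bound that, for $R\asymp \sqrt{cd\log n}$ with $c$ large enough in terms of $q$, is $o(n^{-q})$. For $A_1$, the key point is that $f$ and $f\circ P_R$ agree on $[-R+1,R-1]^d$, so $\widehat I_n(f)-\widehat I_n(f\circ P_R) = \frac1n\sum_j \big(f-f\circ P_R\big)\circ\Phi^{-1}(y_j)$ is supported on those $j$ with $\Phi^{-1}(y_j)\notin[-R+1,R-1]^d$; bounding the summand by $2Be^{M(\sqrt d R)^k}$ (using that $|P_R(\boldsymbol x)|\le|\boldsymbol x|$ componentwise when... more precisely $|P_R(x)|\le |x|$, so $|f\circ P_R|\le Be^{M(\sqrt d R)^k}$ there too is not quite right — instead bound $|f\circ P_R(\boldsymbol x)|$ by $Be^{M d^{k/2}R^k}$ via $|P_R(\boldsymbol x)|_\infty\le R$) and taking $q$-th moments, one needs $\E\big[\big(\frac1n\sum_j \mathbf 1\{\Phi^{-1}(y_j)\notin [-R+1,R-1]^d\}\big)^q\big]$ to be small; since each $y_j\sim\mathcal U[0,1)^d$, the expected fraction of such points is at most $d\cdot\Prob[|Z|>R-1]$ for a standard normal $Z$, which is again subgaussian-small, and the $q$-th moment of the empirical fraction is bounded by the expected fraction (as the fraction lies in $[0,1]$, $(\cdot)^q\le(\cdot)$), yielding the same $o(n^{-q})$ control.

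The main obstacle is the bookkeeping in the $A_1$ term: one must be careful that the crude pointwise bound $2B\,\mathrm{(poly)}\,e^{Md^{k/2}R^k}$ on $(f-f\circ P_R)\circ\Phi^{-1}(y_j)$, multiplied by the $L^q$ norm of the empirical tail-fraction, still decays faster than $n^{-q}$ — this forces the constant $c$ in $R_n=\sqrt{cd\log n}$ to be chosen large relative to $q$ (so that $n^{-c/2\cdot(\text{something})}$ beats $n^{-q}$ even after the $e^{Md^{k/2}(cd\log n)^{k/2}}$ blow-up, which is subpolynomial since $k<2$), and one must check that this same $c$ is consistent with the logarithmic exponent $q(\tfrac{3d}{2}-1)$ and the subexponential factor $\exp\{qM(4qd\log n)^{k/2}\}$ claimed in the statement — i.e. that the dominant contribution really comes from $A_2$ with $R$ chosen to balance its polynomial-in-$R$ prefactor against the tail terms. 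Once the exponents are tracked, combining the three bounds via Minkowski and simplifying the logarithmic powers gives the stated rate, with the big-$\mathcal O$ constant depending on $M,B,k,d,q$ as asserted; I would relegate the explicit constant-chasing to the appendix as the paper indicates.
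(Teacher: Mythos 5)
Your overall architecture matches the paper's appendix proof: the same three-term decomposition via the smoothed projection $P_R$, Koksma--Hlawka with the Hardy--Krause variation bound $\lesssim B R^{d} e^{M(\sqrt d R)^k}$ for the middle term (raised to the $q$-th power, which is legitimate since that bound is deterministic), a Gaussian-tail estimate for the bias term, and a radius $R\asymp\sqrt{\log n}$ scaled with $q$ so that the tail contributions match $n^{-q}$. Those parts are sound and essentially identical to what the paper does.

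The genuine gap is in your treatment of $A_1=\bigl|\widehat I_n(f)-\widehat I_n(f\circ P_R)\bigr|$. You bound each summand $(f-f\circ P_R)\circ\Phi^{-1}(y_j)$ uniformly by $2Be^{Md^{k/2}R^k}$ and then multiply by the $L^q$ norm of the empirical fraction of points falling outside $[-R+1,R-1]^d$. The uniform bound is false: membership in $G_e(M,B,k)$ only gives $|f(\boldsymbol x)|\le Be^{M|\boldsymbol x|^k}$, so on exactly the event you are restricting to ($\Phi^{-1}(y_j)$ outside the box) the term $|f(\Phi^{-1}(y_j))|$ is unbounded and can far exceed $Be^{Md^{k/2}R^k}$; only $|f\circ P_R|$ admits that bound. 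Consequently the decoupling ``uniform bound $\times$ moment of the tail fraction'' does not close, and any repair (e.g.\ H\"older against moments of $e^{M|X|^k}$) effectively reduces to controlling $\E\left|f(X)-f\circ P_R(X)\right|^q$ directly. That is precisely the paper's simpler route: by the power-mean inequality $\bigl|\frac1n\sum_j Z_j\bigr|^q\le\frac1n\sum_j|Z_j|^q$ and the fact that each $\Phi^{-1}(y_j)\sim\mathcal N(\boldsymbol 0,I_d)$ marginally, one gets $\E[A_1^q]\le\E\left|f(X)-f\circ P_R(X)\right|^q$, the same quantity that controls your $A_3$; a single projection-error lemma (\cref{lemma:ProjectionError}) then bounds it by a constant times $(R-1)^{d-2}e^{-(R-1)^2/4}$, using Young's inequality to absorb $qMr^k$ into $r^2/4$ (here $k<2$ is essential), after which $R=(4q\log n)^{1/2}$ (plus $1$ when $k\le 1$) yields the stated rate with the factor $\exp\{qM(4qd\log n)^{k/2}\}$. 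With your $A_1$ argument replaced by this reduction, the rest of your outline goes through.
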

\begin{proof}
See \cref{appendix:Proof of Lq-convergence}.
\end{proof}
\begin{Remark}
Note that for any $\epsilon>0$ and $0<k<2$, let $t=\log n$, we have
\begin{align}
&\lim_{n \to \infty} n^{-\epsilon}(\log n)^{q(\frac{3 d}{2}-1)}  \exp \left\{ qM \left( {4qd \log n} \right)^\frac{k}{2} \right\} \notag\\
= & \lim_{t \to \infty} t^{q(\frac{3 d}{2}-1)}\exp \left\{ qM (4qd)^{\frac{k}{2}} t^{\frac{k}{2}} - \epsilon t \right\} =0. \notag 
\end{align}
Hence the convergence rate in \cref{thm:Lq-convergence} is $\mathcal{O}(n^{-q+\epsilon})$.
\end{Remark}

 This result is crucial for our subsequent application of the H\"older's inequality to analyze the convergence of the MAMIS estimator; see \cref{term:byHolder}.

\section{Convergence rate of the RQMC-based MAMIS estimator}\label{sec:main}
In this section, we analyze the four errors considered in \cref{subsec:MAMIS} and derive the RMSE rate of the MAMIS estimator \cref{term:MAMISEsimator} under RQMC setting.

Denote $F_{\theta}:\mathbb{R}^d \to \mathbb{R}^s$ as a generator corresponding to the distribution $Q(\theta)$, that is, the random variabe $F_{\theta}(U) $ will follow the distribution $ Q(\theta)$ if $U$ follows the uniform distribution on the $d$-dimensional unit cube $[0,1)^d$. Since the growth condition in \cref{subsec:growth condition} is proposed for the integral problem with respect to the standard normal distribution, we construct $T_{\theta} := F_{\theta} \circ \Phi$, a transport map from $\mathcal{N}(\boldsymbol{0},I_{d})$ to $Q(\theta)$. Let $X_i^t = F_{\widehat{\theta}_t}(u_i^t)$ be the RQMC-based samples using in the MAMIS estimator \cref{term:MAMISEsimator}, where $\{u_i^t\}_{i=1}^{N_t}$ is an RQMC point set satisfying the conditions in \cref{lemma:L2-convergence}. Then the RMSE between the RQMC-based MAMIS estimator and the integral \cref{term:integral} is 
\begin{equation}\notag 
\Big(\E|\widehat{\Pi}_T^{\mathrm{MAMIS}}(\psi) - I|^2\Big)^{\frac{1}{2}},
\end{equation}
where the estimator $\widehat{\Pi}_{T}^{\text{MAMIS}}(\psi)$ is defined in \cref{term:MAMISEsimator}.

In order to establish our results, the following assumptions are necessary.
\begin{Assumption}\label{assumption:all}
Denote  
\begin{align}
m(\boldsymbol{x}) := \inf_{\theta \in \Theta} q(\boldsymbol{x},\theta),  \notag 
\end{align}
we assume that $q(\cdot,\theta^{\ast})$ is bounded and the following conditions are satisfied,
\begin{align}
&m(\boldsymbol{x})>0, \label{assumption:min-positive}\\
&\left|q\left(\boldsymbol{x}, \theta_1\right)-q\left( \boldsymbol{x}, \theta_2\right) \right| \leq L\left\|\theta_1-\theta_2\right\|, \forall \boldsymbol{x} \in \mathcal{X}, \label{assumption:lipschitz} \\
&\exists \eta>0, \text{ such that} \int_{\mathcal{X}}\left|\frac{\pi\psi}{mq(\cdot,\theta^{\ast})} (\boldsymbol{x})\right|^{2+\eta}\mathrm{d} \boldsymbol{x}<\infty. \label{assumption:moment}   
\end{align}    
\end{Assumption}

Let $\bar{N}_T = \frac{\Omega_T}{T}$ be the average number of samples used in each step over $T$ iterations and denote $h_j$ as the $j$-th component of $h$. We claim our result here and use the remaining four subsections to prove it.

\begin{theorem}\label{thm:MAMIS-RQMC}
Under \cref{assumption:all}, if for some $0<k<2$,  $\frac{\pi \psi}{q(\cdot,\theta^{\ast})}\circ T_{\theta} \in G_e(M,B,k)$ and $\frac{\pi h_{j}}{q(\cdot,\theta^{\ast})}\circ T_{\theta} \in G_e(M,B,k)$ $(\forall \theta \in \Theta, j = 1,\ldots,D)$, then for any $\epsilon>0$, the RQMC-based MAMIS estimator has an RMSE as
\begin{align}
 \Big(\mathbb{E}\Big|\widehat{\Pi}_T^{\mathrm{MAMIS}}(\psi) - I\Big|^2\Big)^{\frac{1}{2}} 
= \mathcal{O}(\bar{N}_T^{-1+\epsilon}), \notag
\end{align}
where the constant in the big-$\mathcal{O}$ bound depends on $M, B, k, d, D,L,\eta, \epsilon$.
\end{theorem}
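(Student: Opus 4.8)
The plan is to bound the RMSE by the three error terms identified in \cref{subsec:MAMIS}, namely
\begin{align}
\Big(\E\big|\widehat{\Pi}_T^{\mathrm{MAMIS}}(\psi) - I\big|^2\Big)^{1/2}
\le &\ \Big(\E\big|\widehat{\Pi}_T^{\mathrm{MAMIS}}(\psi) - \widehat{\Pi}_T^{\ast}(\psi)\big|^2\Big)^{1/2} \notag\\
&+ \sum_{t=1}^T \frac{N_t}{\Omega_T}\Big(\E\big|\widehat{\pi}_t^{\ast}(\psi) - I_{\psi}^{\ast}(\widehat{\theta}_t)\big|^2\Big)^{1/2}
+ \sum_{t=1}^T \frac{N_t}{\Omega_T}\Big(\E\big|I_{\psi}^{\ast}(\widehat{\theta}_t) - I_{\psi}^{\ast}(\theta^{\ast})\big|^2\Big)^{1/2}, \notag
\end{align}
and to show each of the three contributions is $\mathcal{O}(\bar N_T^{-1+\epsilon})$. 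The middle term is the most direct: since $X_i^t = F_{\widehat\theta_t}(u_i^t)$, the summand $\widehat\pi_t^{\ast}(\psi)$ is exactly an RQMC estimator of the form $\widehat I_{N_t}$ applied to $\frac{\pi\psi}{q(\cdot,\theta^{\ast})}\circ T_{\widehat\theta_t}$, which by hypothesis lies in $G_e(M,B,k)$; conditioning on $\widehat\theta_t$ (which depends only on earlier-stage points, hence is independent of $\{u_i^t\}$) and applying \cref{thm:Lq-convergence} with $q=2$ gives each term the rate $\mathcal{O}(N_t^{-1+\epsilon})$, and averaging against the weights $N_t/\Omega_T$ yields $\mathcal{O}(\bar N_T^{-1+\epsilon})$ after using convexity/monotonicity of the rate.

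The third term is handled by a Lipschitz-in-$\theta$ argument combined with control of the parameter error $\lVert\widehat\theta_t-\theta^{\ast}\rVert$. By \cref{def:OperatorI_Psi} and \cref{assumption:lipschitz}, $|I_{\psi}^{\ast}(\widehat\theta_t)-I_{\psi}^{\ast}(\theta^{\ast})| \le L\lVert\widehat\theta_t-\theta^{\ast}\rVert \int_{\mathcal X}\big|\frac{\pi\psi}{m\,q(\cdot,\theta^{\ast})}(\x)\big|\,\mathrm{d}\x$, the integral being finite by \cref{assumption:moment} and H\"older. So it suffices to prove $\big(\E\lVert\widehat\theta_t-\theta^{\ast}\rVert^2\big)^{1/2} = \mathcal{O}(N_{t-1}^{-1+\epsilon})$ (or, more carefully, a bound in terms of all earlier sample sizes). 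This is where the recursion enters: writing $\widehat\theta_{t+1}-\theta^{\ast} = \big(\widehat\pi_t^{\ast}(h) - \theta^{\ast}\big) + \big(\widehat\theta_{t+1} - \widehat\pi_t^{\ast}(h)\big)$, the first bracket is an RQMC error for each component $\frac{\pi h_j}{q(\cdot,\theta^{\ast})}\circ T_{\widehat\theta_t} \in G_e(M,B,k)$ (rate $N_t^{-1+\epsilon}$ by \cref{thm:Lq-convergence}), and the second bracket is the difference between using $q(\cdot,\widehat\theta_t)$ and $q(\cdot,\theta^{\ast})$ in the denominator, which is again Lipschitz-controlled by $\lVert\widehat\theta_t-\theta^{\ast}\rVert$. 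Taking $L^q$ norms for $q>2$ (so that H\"older can later absorb cross terms) and iterating gives a bound on $\big(\E\lVert\widehat\theta_t-\theta^{\ast}\rVert^q\big)^{1/q}$ that decays at the desired rate, provided the Lipschitz contraction constant does not blow up — one must check that the $m(\x)>0$ lower bound keeps the weight ratios bounded.

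The first term, $\E\big|\widehat\Pi_T^{\mathrm{MAMIS}}(\psi) - \widehat\Pi_T^{\ast}(\psi)\big|^2$, is the main obstacle. The difference comes entirely from replacing the mixture denominator $\Omega_T^{-1}\sum_{l=1}^T N_l q(X_i^t,\widehat\theta_l)$ by $q(X_i^t,\theta^{\ast})$; using $|\frac1a-\frac1b|\le\frac{|a-b|}{m(\x)^2}$ on the positive lower bound $m(\x)$, the per-sample discrepancy is bounded by $\frac{\pi(X_i^t)|\psi(X_i^t)|}{m(X_i^t)^2}\cdot\Omega_T^{-1}\sum_l N_l L\lVert\widehat\theta_l-\theta^{\ast}\rVert$. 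The delicate point is that the factor $\frac{\pi\psi}{m^2}$ evaluated at RQMC samples is itself a random quantity correlated with the $\widehat\theta_l$'s, so one cannot simply take expectations factor-by-factor; this is precisely why the paper emphasizes the $L^q$ bound of \cref{thm:Lq-convergence} — one applies H\"older's inequality (see the referenced \cref{term:byHolder}) to split $\E\big[\big(\tfrac1{\Omega_T}\sum_{t,i}\tfrac{\pi\psi}{m^2}(X_i^t)\big)\cdot\big(\tfrac1{\Omega_T}\sum_l N_l\lVert\widehat\theta_l-\theta^{\ast}\rVert\big)\big]$ into a product of an $L^p$ moment of the (RQMC-averaged) weight-type quantity — finite and $\mathcal{O}(1)$ by \cref{assumption:moment} and \cref{thm:Lq-convergence} applied to $\frac{\pi\psi}{m\,q(\cdot,\theta^{\ast})}\circ T_\theta$ — and an $L^{p'}$ moment of the averaged parameter error, which is $\mathcal{O}(\bar N_T^{-1+\epsilon})$ by the second step. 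Choosing $p$ close to $1$ (so $p'$ large) and feeding the resulting slack into the $\epsilon$ completes the bound. I would carry out the steps in the order: (i) establish the $L^q$ parameter-error recursion; (ii) dispatch the middle and third terms; (iii) handle the first term via H\"older using (i) and \cref{thm:Lq-convergence}; (iv) combine and absorb all polylogarithmic and $\epsilon$-type losses.
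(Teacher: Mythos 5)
Your overall architecture matches the paper's: the same three-term decomposition, conditioning on $\widehat{\theta}_t$ plus \cref{thm:Lq-convergence} for the quadrature term (\cref{thm:pi-convergence}), the Lipschitz bound \cref{assumption:lipschitz} for $|I_{\psi}^{\ast}(\widehat{\theta}_t)-I_{\psi}^{\ast}(\theta^{\ast})|$, and H\"older's inequality with exponents tied to $\eta$ for $|\widehat{\Pi}_T^{\mathrm{MAMIS}}(\psi)-\widehat{\Pi}_T^{\ast}(\psi)|$. The genuine gap is in your treatment of the parameter error. You split $\widehat{\theta}_{t+1}-\theta^{\ast}=\big(\widehat{\pi}_t^{\ast}(h)-\theta^{\ast}\big)+\big(\widehat{\theta}_{t+1}-\widehat{\pi}_t^{\ast}(h)\big)$ and propose to ``iterate'' a Lipschitz recursion. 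Two problems: first, $\widehat{\pi}_t^{\ast}(h)$ is not a pure RQMC error about $\theta^{\ast}$ --- its conditional mean given $\widehat{\theta}_t$ is $I_{h}^{\ast}(\widehat{\theta}_t)$, not $\theta^{\ast}$, so this bracket itself carries a bias of order $\lVert\widehat{\theta}_t-\theta^{\ast}\rVert$; second, the resulting recursion has the form $e_{t+1}\le c\,N_t^{-1+\epsilon}+C\,e_t$ with a constant $C$ built from $L$ and integrals of weight-type quantities, and nothing in \cref{assumption:all} makes $C<1$. Without a contraction, iterating gives factors growing like $C^{T}$, which cannot be absorbed into $\mathcal{O}(\bar{N}_T^{-1+\epsilon})$ (take $N_t\equiv N$ and $T\to\infty$). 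The decomposition is also unnecessary: because the update weight is $\pi/q(\cdot,\widehat{\theta}_t)$, one has $\int_{[0,1)^d} h_{\widehat{\theta}_t}\circ F_{\widehat{\theta}_t}(\boldsymbol{u})\,\mathrm{d}\boldsymbol{u}=\int_{\mathcal{X}} h\pi\,\mathrm{d}\boldsymbol{x}=\theta^{\ast}$ for every realization of $\widehat{\theta}_t$, so $\widehat{\theta}_{t+1}-\theta^{\ast}$ is exactly a single RQMC quadrature error conditional on $\widehat{\theta}_t$; applying \cref{thm:Lq-convergence} uniformly over $\theta\in\Theta$ yields the $L^q$ bound $\mathcal{O}(N_t^{-1+\epsilon})$ for every $q\ge 2$ with no recursion at all (this is \cref{thm:ThetaError}), and the $L^{2v}$ version with $v=(2+\eta)/\eta$ is precisely what the H\"older step needs.

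Two smaller points in your first-term argument: bounding the per-sample discrepancy by $\frac{\pi|\psi|}{m^2}|\xi_T-q(\cdot,\theta^{\ast})|$ is cruder than necessary and is not covered by \cref{assumption:moment}, which controls $\frac{\pi\psi}{m\,q(\cdot,\theta^{\ast})}$; since $\xi_T\ge m$, you should keep $q(\cdot,\theta^{\ast})$ in the denominator as the paper does. Also, you cannot invoke \cref{thm:Lq-convergence} for $\frac{\pi\psi}{m\,q(\cdot,\theta^{\ast})}\circ T_{\theta}$, since membership in $G_e(M,B,k)$ is only assumed for $\frac{\pi\psi}{q(\cdot,\theta^{\ast})}\circ T_{\theta}$ and $\frac{\pi h_j}{q(\cdot,\theta^{\ast})}\circ T_{\theta}$; the paper instead bounds $\mathbb{E}\big|\frac{g(X_i^t)}{\xi_T(X_i^t)}\big|^{2u}$ by a change of measure together with $q(\boldsymbol{x},\theta)\le q(\boldsymbol{x},\theta^{\ast})+L\lVert\theta-\theta^{\ast}\rVert$ and the uniform boundedness of $\mathbb{E}\lVert\widehat{\theta}_t-\theta^{\ast}\rVert$, which is the argument you should use there.
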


\begin{corollary}
When $N_1 = N_2 = \cdots = N_T = N$, under the same condition as in \cref{thm:MAMIS-RQMC}, the RQMC-based MAMIS estimator has an RMSE as
\begin{equation}
\left(\mathbb{E}|\widehat{\Pi}_T^{\mathrm{MAMIS}}(\psi) - I|^2\right)^{\frac{1}{2}} = \mathcal{O}(N^{-1+\epsilon}),  \notag 
\end{equation}
where the constant in the big-$\mathcal{O}$ bound depends on $M, B, k, d, D,L, \eta, \epsilon$.
\end{corollary}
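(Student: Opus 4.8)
The plan is to obtain the corollary as an immediate specialization of \cref{thm:MAMIS-RQMC}, using the fact that the equal-sample-size regime $N_1 = N_2 = \cdots = N_T = N$ makes the average sample size $\bar{N}_T$ coincide exactly with $N$.

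First I would confirm that the hypotheses required to apply \cref{thm:MAMIS-RQMC} are in force. Since the corollary is stated ``under the same condition as in \cref{thm:MAMIS-RQMC},'' \cref{assumption:all} holds together with the growth-condition memberships $\frac{\pi\psi}{q(\cdot,\theta^{\ast})}\circ T_{\theta} \in G_e(M,B,k)$ and $\frac{\pi h_j}{q(\cdot,\theta^{\ast})}\circ T_{\theta} \in G_e(M,B,k)$ for all $\theta\in\Theta$, $j=1,\ldots,D$, with some $0<k<2$. Crucially, neither the statement nor the proof of \cref{thm:MAMIS-RQMC} restricts the sequence $N_1,\ldots,N_T$: the RMSE bound is expressed purely through $\bar{N}_T$, and its implied constant depends only on $M,B,k,d,D,L,\eta,\epsilon$, independently of how $\Omega_T$ is split among the $N_t$.

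Next I would evaluate $\bar{N}_T$ in this regime. Because $\Omega_T = N_1 + \cdots + N_T = TN$ when every $N_t$ equals $N$, the definition $\bar{N}_T = \Omega_T/T$ gives $\bar{N}_T = TN/T = N$ exactly, not merely asymptotically. Substituting this identity into the conclusion of \cref{thm:MAMIS-RQMC} yields
\begin{equation}\notag
\Big(\mathbb{E}\big|\widehat{\Pi}_T^{\mathrm{MAMIS}}(\psi) - I\big|^2\Big)^{\frac{1}{2}} = \mathcal{O}(\bar{N}_T^{-1+\epsilon}) = \mathcal{O}(N^{-1+\epsilon}),
\end{equation}
which is precisely the asserted rate, with the implied constant inherited verbatim.

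There is essentially no obstacle: the entire content is the arithmetic identity $\bar{N}_T = N$, after which \cref{thm:MAMIS-RQMC} applies without modification. The only point deserving care is to make sure the big-$\mathcal{O}$ constant in \cref{thm:MAMIS-RQMC} is genuinely uniform over admissible sample-size sequences, so that specializing to $N_1=\cdots=N_T=N$ does not hide a dependence on $T$ or on the individual $N_t$; this uniformity is exactly what the proof of that theorem delivers.
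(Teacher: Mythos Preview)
Your proposal is correct and matches the paper's treatment: the corollary is stated there without proof as an immediate specialization of \cref{thm:MAMIS-RQMC}, via the identity $\bar{N}_T = N$ when $N_1=\cdots=N_T=N$. Your closing remark about the constant not depending on $T$ is also consistent with the paper, which notes (in the remark following the corollary) that the uniform use of the class $G_e(M,B,k)$ is precisely what eliminates $T$ from the implied constant.
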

\begin{Remark}
Unfortunately, this corollary does not include the relationship between error and the number of iteration $T$. This is because $T$ actually appears in the constant term. In order to estimate the sum of errors at each stage, we use the same function class $G_e(M,B,k)$ to encompass all functions ${\pi \psi}/{q(\cdot,\theta^{\ast})}\circ T_{\theta}, {\pi h_{j}}/{q(\cdot,\theta^{\ast})}\circ T_{\theta}$, which eliminated the influence of $T$ in the constant term. Moreover, \cref{thm:MAMIS-RQMC} is not entirely devoid of the impact of $T$. When we use a strictly increasing sample size $\{N_t\}$, such as $N_t = Nt$, we have $\bar{N}_T=\frac{N(T+1)}{2}$. Applying \cref{thm:MAMIS-RQMC} yields an error order as $\mathcal{O}((NT)^{-1+\epsilon})$.
\end{Remark}
\begin{Remark}\label{remark:PsiCanBeVector}
The convergence order in \cref{thm:MAMIS-RQMC} still holds for the case where the integrand \( \psi \) is a finite-dimensional vector-valued function. This can be shown by applying \cref{thm:MAMIS-RQMC} to each component function of \( \psi \) and noting that the square of the vector norm is equal to the sum of the squares of its components. In this case, the constant term in \cref{thm:MAMIS-RQMC} will also be related to the dimension of the vector-valued function.
\end{Remark}
\subsection{Convergence of the parameters $\widehat{\theta}_t$}
For the parameter error $\lVert\widehat{\theta}_t - \theta^{\ast}\rVert$, let $h_{\theta}(\boldsymbol{x}) = \frac{\pi(\boldsymbol{x})h(\boldsymbol{x})}{q(\boldsymbol{x},\theta)}$, then
\begin{equation}
\Big\lVert \widehat{\theta}_{t+1} - \theta^{\ast} \Big\rVert = \Big\lVert\frac{1}{N_t} \sum_{i=1}^{N_t} h_{\widehat{\theta}_t} \circ F_{\widehat{\theta}_t}(u_i^t) - \int_{[0,1)^d} h_{\widehat{\theta}_t} \circ F_{\widehat{\theta}_t}(\boldsymbol{u})\mathrm{d}\boldsymbol{u} \Big\rVert. \notag    
\end{equation}
To analyze the convergence of it, we need only consider the convergence of its respective components.
We write $\theta^{\ast} = (\theta^{\ast}_1,\ldots,\theta^{\ast}_D)^{\top}, h(\boldsymbol{x}) = (h_1(\boldsymbol{x}),\ldots,h_D(\boldsymbol{x}))^{\top},$ and $\widehat{\theta}_{t+1} = (\widehat{\theta}_{t+1,1},\ldots,\widehat{\theta}_{t+1,D})^{\top}$, $h_{\theta}(\boldsymbol{x}) = (h_{\theta}^{(1)}(\boldsymbol{x}),\ldots,h_{\theta}^{(D)}(\boldsymbol{x}))^{\top}, $ then for any $j=1,\ldots,D,$
\begin{equation}
\Big|\widehat{\theta}_{t+1,j} - \theta^{\ast}_j\Big|
 = \Big|\frac{1}{N_t} \sum_{i=1}^{N_t} h_{\widehat{\theta}_t}^{(j)} \circ F_{\widehat{\theta}_t}(u_i^t) - \int_{[0,1)^d} h_{\widehat{\theta}_t}^{(j)} \circ F_{\widehat{\theta}_t}(\boldsymbol{u})\mathrm{d}\boldsymbol{u}\Big|. \label{term:theta^(j)}
\end{equation}

 From \cref{lemma:L2-convergence}, we obtain the following convergence result for the parameter error $\lVert\widehat{\theta}_t - \theta^{\ast}\rVert$.
\begin{theorem}\label{thm:ThetaError}
Given $q\geq 2$, if for some $0<k<2, \frac{\pi h_{j}}{q(\cdot,\theta^{\ast})}\circ T_{\theta} \in G_e(M,B,k)$ $(\forall \theta \in \Theta, j = 1,\ldots,D)$, where $T_{\theta} = F_{\theta} \circ \Phi$ , then we have
\begin{equation}
\left(\mathbb{E}{\left|\widehat{\theta}_{t+1,j} - \theta^{\ast}_j\right|}^q\right)^{\frac{1}{q}}
= \mathcal{O}\left(N_t^{-1}(\log N_t)^{\frac{3d}{2}-1}\exp{\{M(4qd\log N_t)^{\frac{k}{2}}\}}\right), \notag
\end{equation}
where the constant in the big-$\mathcal{O}$ bound depends on $M,B,k,d,q$.
Thus we can obtain
\begin{equation}
\left(\mathbb{E}{\lVert\widehat{\theta}_{t+1} - \theta^{\ast}\rVert}^q\right)^{\frac{1}{q}}
= \mathcal{O}\left(N_t^{-1}(\log N_t)^{\frac{3d}{2}-1}\exp{\{M(4qd\log N_t)^{\frac{k}{2}}\}}\right), \label{term:ThetaError}
\end{equation}
where the constant in the big-$\mathcal{O}$ bound depends on $M,B,k,d,D,q$.
\end{theorem}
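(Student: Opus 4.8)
The plan is to recognize that, componentwise, $\widehat{\theta}_{t+1,j}-\theta^{\ast}_j$ in \cref{term:theta^(j)} is exactly a one-step RQMC quadrature error of the form $\widehat I_{N_t}(f)-\E[f(X)]$ appearing in \cref{term:RQMCEstimator_ToIntroducePQMC}, and then to invoke \cref{thm:Lq-convergence}. First I would rewrite the empirical average in \cref{term:theta^(j)}: inserting $\Phi^{-1}$ and using $X_i^t=F_{\widehat{\theta}_t}(u_i^t)=T_{\widehat{\theta}_t}(\Phi^{-1}(u_i^t))$, it becomes $\widehat I_{N_t}(f)$ with $f=h_{\widehat{\theta}_t}^{(j)}\circ T_{\widehat{\theta}_t}=\frac{\pi h_j}{q(\cdot,\widehat{\theta}_t)}\circ T_{\widehat{\theta}_t}$ on $\R^d$ and with the RQMC point set $\{u_i^t\}_{i=1}^{N_t}$. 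Since $F_{\widehat{\theta}_t}$ pushes the uniform law on $[0,1)^d$ onto $Q(\widehat{\theta}_t)$ — equivalently $T_{\widehat{\theta}_t}$ pushes $\mathcal{N}(\boldsymbol{0}, I_d)$ onto $Q(\widehat{\theta}_t)$ — the change of variables $\boldsymbol x=F_{\widehat{\theta}_t}(\boldsymbol u)$ shows the integral term equals $\E[f(X)]=\int_{\mathcal X}\pi(\boldsymbol x)h_j(\boldsymbol x)\,\mathrm d\boldsymbol x=\theta^{\ast}_j$, where \cref{assumption:all} guarantees the integrand is well defined and integrable. Thus $\widehat{\theta}_{t+1,j}-\theta^{\ast}_j=\widehat I_{N_t}(f)-\E[f(X)]$.

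The one real subtlety is that $f$ depends on the random iterate $\widehat{\theta}_t$, whereas \cref{thm:Lq-convergence} is stated for a fixed integrand. I would handle this by conditioning on $\widehat{\theta}_t$: the stage-$t$ RQMC points $\{u_i^t\}$ are drawn afresh, independently of the history (hence of $\widehat{\theta}_t$), so for each deterministic $\theta\in\Theta$ one has $\E\big[|\widehat{\theta}_{t+1,j}-\theta^{\ast}_j|^q \mid \widehat{\theta}_t=\theta\big]=\E\big[|\widehat I_{N_t}(f_\theta)-\E[f_\theta(X)]|^q\big]$ with the deterministic $f_\theta=\frac{\pi h_j}{q(\cdot,\theta)}\circ T_\theta$. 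By hypothesis $f_\theta\in G_e(M,B,k)$ for every $\theta\in\Theta$, so this is at most $\sup_{f\in G_e(M,B,k)}\E[|\widehat I_{N_t}(f)-\E[f(X)]|^q]$, which \cref{thm:Lq-convergence} controls by $\mathcal O\!\big(N_t^{-q}(\log N_t)^{q(3d/2-1)}\exp\{qM(4qd\log N_t)^{k/2}\}\big)$ uniformly in $\theta$; taking the outer expectation over $\widehat{\theta}_t$ leaves this unchanged. Extracting the $q$-th root then gives the componentwise claim: $(N_t^{-q})^{1/q}=N_t^{-1}$, $((\log N_t)^{q(3d/2-1)})^{1/q}=(\log N_t)^{3d/2-1}$, and the prefactor $q$ inside the exponential cancels against the $1/q$ power, leaving $\exp\{M(4qd\log N_t)^{k/2}\}$, with constant depending on $M,B,k,d,q$.

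Finally, to pass from the components to $\norm{\widehat{\theta}_{t+1}-\theta^{\ast}}$ I would use $\norm{\cdot}\le\sum_{j=1}^D|(\cdot)_j|$ on $\R^D$ together with Minkowski's inequality, giving $\big(\E\norm{\widehat{\theta}_{t+1}-\theta^{\ast}}^q\big)^{1/q}\le\sum_{j=1}^D\big(\E|\widehat{\theta}_{t+1,j}-\theta^{\ast}_j|^q\big)^{1/q}$; each summand obeys the componentwise bound, so the sum is $D$ times it, which is \cref{term:ThetaError} after absorbing $D$ (and hence a $D$-dependence) into the constant.

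I expect the main obstacle to be the conditioning step of the second paragraph: it requires making precise the independence between $\widehat{\theta}_t$ and the stage-$t$ RQMC points so that, conditionally, $\widehat{\theta}_{t+1,j}-\theta^{\ast}_j$ is an honest RQMC error of a \emph{fixed} function lying in $G_e(M,B,k)$, and it requires using \cref{thm:Lq-convergence} in its \emph{uniform} (supremum over $G_e(M,B,k)$) form so that the resulting rate is free of the realization of $\widehat{\theta}_t$; one also needs the iterates $\widehat{\theta}_t$ to remain in $\Theta$ so that the hypothesis is applicable. Everything else is the change-of-variables bookkeeping of the first paragraph and elementary manipulations of powers.
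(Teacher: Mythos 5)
Your proposal is correct and matches the paper's own argument essentially step for step: condition on $\widehat{\theta}_t$, use the independence of the stage-$t$ RQMC points from $\widehat{\theta}_t$ to reduce to a fixed integrand, apply the uniform (supremum over $G_e(M,B,k)$) bound of \cref{thm:Lq-convergence}, take the $q$-th root, and pass to the norm via the triangle/Minkowski inequality over the $D$ components. The only wrinkle — whether the class hypothesis, stated for ${\pi h_j}/{q(\cdot,\theta^{\ast})}\circ T_{\theta}$, covers the actual integrand $h^{(j)}_{\theta}\circ T_{\theta}={\pi h_j}/{q(\cdot,\theta)}\circ T_{\theta}$ — is glossed over identically in the paper's proof, so it is not a gap specific to your argument.
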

\begin{proof}
From \cref{term:theta^(j)}, the conditional expectation
\begin{align} 
& \mathbb{E}\left[{\left|\widehat{\theta}_{t+1,j} - \theta^{\ast}_j\right|}^q \bigg| \widehat{\theta_t}\right] \notag\\
= & \E\left[\Big|\frac{1}{N_t} \sum_{i=1}^{N_t} h_{\widehat{\theta}_t}^{(j)} \circ F_{\widehat{\theta}_t}(u_i^t) - \int_{[0,1)^d} h_{\widehat{\theta}_t}^{(j)} \circ F_{\widehat{\theta}_t}(\boldsymbol{u})\mathrm{d}\boldsymbol{u}\Big|^q \bigg| \widehat{\theta}_t \right] \notag \\
= & \E\left[\Big|\frac{1}{N_t} \sum_{i=1}^{N_t} h_{\widehat{\theta}_t}^{(j)} \circ T_{\widehat{\theta}_t} \circ \Phi^{-1}(u_i^t) - \int_{[0,1)^d} h_{\widehat{\theta}_t}^{(j)} \circ T_{\widehat{\theta}_t} \circ \Phi^{-1}(\boldsymbol{u})\mathrm{d}\boldsymbol{u}\Big|^q \bigg| \widehat{\theta}_t \right] \notag \\
= & \E\left[\Big|\widehat{I}_{N_t}(h_{\widehat{\theta}_t}^{(j)} \circ T_{\widehat{\theta}_t})- \E[h_{\widehat{\theta}_t}^{(j)} \circ T_{\widehat{\theta}_t}(X)]\Big|^q \bigg| \widehat{\theta}_t \right], \notag  
\end{align}
where $X$ is a standard normal random vector. When $\widehat{\theta}_{t} = \theta$ is given, using $h^{(j)}_{\theta} \circ T_{\theta} = \frac{\pi h_j}{q(\cdot,\theta^{\ast})}\circ T_{\theta} \in G_e(M,B,k)$ and \cref{thm:Lq-convergence}, we have 
\begin{align}
& \E\left[|\widehat{I}_{N_t}(h_{\widehat{\theta}_t}^{(j)} \circ T_{\widehat{\theta}_t})- \E[h_{\widehat{\theta}_t}^{(j)} \circ T_{\widehat{\theta}_t}(X)]|^q \bigg| \widehat{\theta}_t = \theta \right] \notag\\
= & \E\left[|\widehat{I}_{N_t}(h^{(j)}_{\theta} \circ T_{\theta})- \E[h^{(j)}_{\theta} \circ T_{\theta}(X)]|^q \bigg| \widehat{\theta}_t = \theta \right]\notag\\
= & \E\left[|\widehat{I}_{N_t}(h^{(j)}_{\theta} \circ T_{\theta})- \E[h^{(j)}_{\theta} \circ T_{\theta}(X)]|^q \right] \label{term:HandleConditionalExpectation}\\
\lesssim & N_t^{-q}(\log N_t)^{q(\frac{3d}{2}-1)}\exp{\{qM(4qd\log N_t)^{\frac{k}{2}}\}} \notag
\end{align}
for any $\theta \in \Theta$, where the symbol $\lesssim$ is used for hiding a constant independently of $N_t$ and \cref{term:HandleConditionalExpectation} employs the independence between the RQMC point set $\{u_i^t\}_{i=1}^{N_t}$ and $\widehat{\theta}_t$. Thus we obtain 
\begin{equation}
\mathbb{E}\left[{\left|\widehat{\theta}_{t+1,j} - \theta^{\ast}_j\right|}^q \Big\vert \widehat{\theta_t}\right] \lesssim N_t^{-q}(\log N_t)^{q(\frac{3d}{2}-1)}\exp{\{qM(4qd\log N_t)^{\frac{k}{2}}\}}. \notag\\
\end{equation}

Taking the expectation on both sides of this inequality, we derive the first inequality in the theorem.

Then we can obtain the sencond inequality in the theorem as
\begin{align}
\left(\mathbb{E}{\lVert\widehat{\theta}_{t+1} - \theta^{\ast}\rVert}^q\right)^{\frac{1}{q}}
& \leq \sum_{j=1}^D \left(\mathbb{E}{\lVert\widehat{\theta}_{t+1,j} - \theta^{\ast}_j\rVert}^q\right)^{\frac{1}{q}} \notag\\
& \lesssim N_t^{-1}(\log N_t)^{\frac{3d}{2}-1}\exp{\{M(4qd\log N_t)^{\frac{k}{2}}\}}.  \notag
\end{align}
\end{proof}

\subsection{On the property of operator $I_{\psi}^{\ast}$}
For the error term $|I_{\psi}^{\ast}(\widehat{\theta}_{t}) - I_{\psi}^{\ast}(\theta^{\ast})|$, to simplify the notation, we denote $g(\boldsymbol{x}) := \frac{\pi(\boldsymbol{x})\psi(\boldsymbol{x})}{q(\boldsymbol{x}, \theta^{\ast})}$, then we can write $I_{\psi}^{\ast}(\theta)$ in \cref{def:OperatorI_Psi} as
\begin{equation}
I_{\psi}^{\ast}(\theta) = \int_{\mathcal{X}} \left[ \frac{\pi(\boldsymbol{x})\psi(\boldsymbol{x})}{q(\boldsymbol{x},\theta^{\ast})}q(\boldsymbol{x}, \theta) \right] \mathrm{d}\boldsymbol{x}
 = \int_{\mathcal{X}}  g(\boldsymbol{x})q(\boldsymbol{x}, \theta)  \mathrm{d}\boldsymbol{x}. \notag
\end{equation}

Let \( \|\cdot\|_{\infty} \) denote the essential supremum, under \cref{assumption:all}, using the H\"older's inequality, we know 
\begin{align}
\int_{\mathcal{X}} {\left|g(\boldsymbol{x})\right|} \mathrm{d}\boldsymbol{x}
\leq & \int_{\mathcal{X}} {\left|\frac{g(\boldsymbol{x})}{m(\boldsymbol{x})}\right|} {q(\boldsymbol{x}, \theta^{\ast})} \mathrm{d}\boldsymbol{x} \notag\\
\leq & \left(\int_{\mathcal{X}} {\left|\frac{g(\boldsymbol{x})}{m(\boldsymbol{x})}\right|}^{2+\eta} \mathrm{d}\boldsymbol{x}\right)^{\frac{1}{2+\eta}} \left(\int_{\mathcal{X}} q^{\frac{2+\eta}{1+\eta}}(\boldsymbol{x}, \theta^{\ast}) \mathrm{d}\boldsymbol{x}\right)^{\frac{1+\eta}{2+\eta}} \notag\\
\leq & \left(\int_{\mathcal{X}} {\left|\frac{g(\boldsymbol{x})}{m(\boldsymbol{x})}\right|}^{2+\eta} \mathrm{d}\boldsymbol{x}\right)^{\frac{1}{2+\eta}} \left( {\lVert q(\cdot,\theta^{\ast}) \rVert}_{\infty}^{\frac{1}{1+\eta}} \int_{\mathcal{X}} q(\boldsymbol{x}, \theta^{\ast}) \mathrm{d}\boldsymbol{x}\right)^{\frac{1+\eta}{2+\eta}} \notag\\
= & \left(\int_{\mathcal{X}} {\left|\frac{g(\boldsymbol{x})}{m(\boldsymbol{x})}\right|}^{2+\eta} \mathrm{d}\boldsymbol{x}\right)^{\frac{1}{2+\eta}}  {\lVert q(\cdot,\theta^{\ast}) \rVert}_{\infty}^{\frac{1}{2+\eta}}  \notag\\
< & \infty, \notag
\end{align}
where $\eta$ appears in assumption \cref{assumption:moment} and we use this assumption in the last inequality.
Thus we have
\begin{align}
\left|I_{\psi}^{\ast}(\widehat{\theta}_t) - I_{\psi}^{\ast}(\theta^{\ast})\right|
& = \left| \int_\mathcal{X} g(\boldsymbol{x})q(\boldsymbol{x},\widehat{\theta}_t)  \mathrm{d}\boldsymbol{x} - \int g(\boldsymbol{x})q(\boldsymbol{x},\theta^{\ast}) \mathrm{d}\boldsymbol{x} \right| \notag\\
& \leq  \int_\mathcal{X} \left|g(\boldsymbol{x})\right||q(\boldsymbol{x},\widehat{\theta}_t)  \mathrm{d}\boldsymbol{x} - q(\boldsymbol{x},\theta^{\ast})| \mathrm{d}\boldsymbol{x} \notag\\
& \leq L \int_\mathcal{X} {\left|g(\boldsymbol{x})\right|} \lVert\widehat{\theta}_t - \theta^{\ast}\rVert \mathrm{d}\boldsymbol{x} \notag\\
& =L \int_\mathcal{X} {\left|g(\boldsymbol{x})\right|} \mathrm{d}\boldsymbol{x} \lVert\widehat{\theta}_t - \theta^{\ast} \rVert, \label{term:Operator property}
\end{align}
which is related to the convergence of $\widehat{\theta}_t$. According to \cref{thm:ThetaError}, we obtain the following result for the error term $|I_{\psi}^{\ast}(\widehat{\theta}_{t}) - I_{\psi}^{\ast}(\theta^{\ast})|$.
\begin{theorem}\label{thm:AuxiliaryIntegralError}
Under \cref{assumption:all}, we have
\begin{align}
\Big(\mathbb{E}\big|I_{\psi}^{\ast}(\widehat{\theta}_t) - I_{\psi}^{\ast}(\theta^{\ast})\big|^2\Big)^{\frac{1}{2}} = \mathcal{O}\left(N_t^{-1}(\log N_t)^{\frac{3d}{2}-1}\exp{\{M(8d\log N_t)^{\frac{k}{2}}\}}\right), \notag
\end{align}
where the constant in the big-$\mathcal{O}$ constant depends on $M,B,k,d,D,L$.
\end{theorem}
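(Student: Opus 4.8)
The plan is to combine the pathwise estimate already obtained in \cref{term:Operator property} with the $L^q$ parameter-error bound of \cref{thm:ThetaError}; most of the analytic work has in fact been carried out in the paragraphs preceding the theorem, so the argument will be short.

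First I would record the pathwise bound. Writing $g(\boldsymbol{x}) = \pi(\boldsymbol{x})\psi(\boldsymbol{x})/q(\boldsymbol{x},\theta^{\ast})$ and $C_g := \int_{\mathcal{X}} |g(\boldsymbol{x})|\,\mathrm{d}\boldsymbol{x}$, the chain of inequalities leading to \cref{term:Operator property} shows that, for every realization of $\widehat{\theta}_t$,
\begin{equation}
\big|I_{\psi}^{\ast}(\widehat{\theta}_t) - I_{\psi}^{\ast}(\theta^{\ast})\big| \leq L\, C_g\, \big\lVert \widehat{\theta}_t - \theta^{\ast}\big\rVert, \notag
\end{equation}
with $L$ the Lipschitz constant from \cref{assumption:lipschitz}. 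Both $L$ and $C_g$ are deterministic, and the key point --- the only place the moment condition \cref{assumption:moment} is used --- is that $C_g<\infty$; this is precisely the H\"older estimate displayed just before the theorem, bounding $\int_{\mathcal{X}}|g|$ in terms of $\big(\int_{\mathcal{X}}|g/m|^{2+\eta}\big)^{1/(2+\eta)}$ and $\lVert q(\cdot,\theta^{\ast})\rVert_{\infty}$ and invoking \cref{assumption:moment} together with the boundedness of $q(\cdot,\theta^{\ast})$.

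The rest is mechanical. Since $C_g$ and $L$ are constants, taking the second moment of the pathwise bound gives
\begin{equation}
\Big(\mathbb{E}\big|I_{\psi}^{\ast}(\widehat{\theta}_t) - I_{\psi}^{\ast}(\theta^{\ast})\big|^2\Big)^{1/2} \leq L\, C_g\, \Big(\mathbb{E}\big\lVert \widehat{\theta}_t - \theta^{\ast}\big\rVert^2\Big)^{1/2},\notag
\end{equation}
and I would then apply \cref{thm:ThetaError} with $q=2$; since $4qd=8d$ in that case, the right-hand side is $\mathcal{O}\big(N_t^{-1}(\log N_t)^{3d/2-1}\exp\{M(8d\log N_t)^{k/2}\}\big)$, which is exactly the claimed rate. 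The hidden constant absorbs $L$, the finite quantity $C_g$ (which depends on $M,B,k,d,D,\eta$ and $\lVert q(\cdot,\theta^{\ast})\rVert_{\infty}$ through \cref{assumption:moment}) and the constant supplied by \cref{thm:ThetaError}, so it depends only on $M,B,k,d,D,L$.

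I do not expect any genuine obstacle here: the substantive step, the finiteness $C_g<\infty$, has already been established in the text, and what remains is to take $L^2$ norms and quote \cref{thm:ThetaError}. The only point to be careful with is the index bookkeeping --- \cref{thm:ThetaError} as stated controls $\widehat{\theta}_{t+1}$ through $N_t$, so applied to $\widehat{\theta}_t$ it formally involves the sample size of the preceding stage; this is purely notational and does not affect the stated order.
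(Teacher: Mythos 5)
Your proposal is correct and follows essentially the same route as the paper: it invokes the pathwise bound \cref{term:Operator property} (whose constant $\int_{\mathcal{X}}|g|\,\mathrm{d}\boldsymbol{x}$ is finite by the H\"older estimate using \cref{assumption:moment} and the boundedness of $q(\cdot,\theta^{\ast})$), takes $L^2$ norms, and applies \cref{thm:ThetaError} with $q=2$ so that $4qd=8d$. Your remark on the $t$ versus $t+1$ indexing is a fair observation about notation shared with the paper's own statement and does not affect the result.
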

\begin{proof}
By \cref{term:Operator property}, we can bound $\Big(\mathbb{E}\big|I_{\psi}^{\ast}(\widehat{\theta}_t) - I_{\psi}^{\ast}(\theta^{\ast})\big|^2\Big)^{\frac{1}{2}}$ as
\begin{align}
\Big(\mathbb{E}\big|I_{\psi}^{\ast}(\widehat{\theta}_t) - I_{\psi}^{\ast}(\theta^{\ast})\big|^2\Big)^{\frac{1}{2}}
&\leq L\int_\mathcal{X} {\left|g(\boldsymbol{x})\right|} \mathrm{d}\boldsymbol{x} \left(\mathbb{E}\left\lVert\widehat{\theta}_t - \theta^{\ast} \right\rVert^2\right)^{\frac{1}{2}}\notag\\
& \lesssim  N_t^{-1}(\log N_t)^{\frac{3d}{2}-1}\exp{\{M(8d\log N_t)^{\frac{k}{2}}\}}, \notag
\end{align}
where we use $\int_\mathcal{X} {\left|g(\boldsymbol{x})\right|} \mathrm{d}\boldsymbol{x} < \infty$ and \cref{thm:ThetaError} in the last inequality.
\end{proof}

\subsection{Convergence of the auxiliary estimator}
For the error term $|\widehat{\pi}_{t}^{\ast}(\psi) - I_{\psi}^{\ast}(\widehat{\theta}_{t})|$, similar to the proof of \cref{thm:ThetaError}, we can obtain the following result.
\begin{theorem}\label{thm:pi-convergence}
If for some $0<k<2, \frac{\pi \psi}{q(\cdot,\theta^{\ast})}\circ T_{\theta} \in G_e(M,B,k)$ $(\forall \theta \in \Theta)$, where $T_{\theta} = F_{\theta} \circ \Phi$ , we have 
\begin{equation}
\left(\mathbb{E}|\widehat{\pi}_{t}^{\ast}(\psi) - I_{\psi}^{\ast}(\widehat{\theta}_{t})|^2\right)^{\frac{1}{2}} = \mathcal{O}\left(N_t^{-1}(\log N_t)^{\frac{3d}{2}-1}\exp{\{M(8d\log N_t)^{\frac{k}{2}}\}}\right), \notag
\end{equation}
where the constant in the big-$\mathcal{O}$ constant depends on $M,B,k,d$.
\end{theorem}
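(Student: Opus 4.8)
The plan is to mirror the structure of the proof of \cref{thm:ThetaError}, since $\widehat{\pi}_t^{\ast}(\psi)$ is, just like $\widehat{\theta}_{t+1,j}$, an RQMC average of a fixed function evaluated through the transport map $T_{\widehat{\theta}_t}$. Concretely, observe that with $X_i^t = F_{\widehat{\theta}_t}(u_i^t)$ we may write
\begin{equation}\notag
\widehat{\pi}_t^{\ast}(\psi) = \frac{1}{N_t}\sum_{i=1}^{N_t} \frac{\pi(X_i^t)\psi(X_i^t)}{q(X_i^t,\theta^{\ast})}
= \frac{1}{N_t}\sum_{i=1}^{N_t} \Big(\tfrac{\pi\psi}{q(\cdot,\theta^{\ast})}\circ F_{\widehat{\theta}_t}\Big)(u_i^t)
= \widehat{I}_{N_t}\Big(\tfrac{\pi\psi}{q(\cdot,\theta^{\ast})}\circ T_{\widehat{\theta}_t}\Big),
\end{equation}
using $T_{\widehat{\theta}_t} = F_{\widehat{\theta}_t}\circ \Phi$ and $u_i^t \sim \mathcal{U}[0,1)^d$, so that $\Phi^{-1}(u_i^t)$ plays the role of the standard-normal input in \cref{term:RQMCEstimator_ToIntroducePQMC}. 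Meanwhile $I_{\psi}^{\ast}(\widehat{\theta}_t) = \int_{\mathcal{X}} g(\boldsymbol{x}) q(\boldsymbol{x},\widehat{\theta}_t)\,\mathrm{d}\boldsymbol{x}$ is exactly the expectation $\mathbb{E}\big[(\tfrac{\pi\psi}{q(\cdot,\theta^{\ast})}\circ T_{\widehat{\theta}_t})(X)\big]$ with $X\sim\mathcal{N}(\boldsymbol{0},I_d)$, since pushing $\mathcal{N}(\boldsymbol{0},I_d)$ through $T_{\widehat{\theta}_t}$ gives $Q(\widehat{\theta}_t)$.

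Next I would condition on $\widehat{\theta}_t$. Writing $\widehat{\theta}_t = \theta$, the conditional $L^2$ error of $\widehat{\pi}_t^{\ast}(\psi) - I_{\psi}^{\ast}(\widehat{\theta}_t)$ becomes
\begin{equation}\notag
\mathbb{E}\Big[\big|\widehat{I}_{N_t}(\tfrac{\pi\psi}{q(\cdot,\theta^{\ast})}\circ T_{\theta}) - \mathbb{E}[(\tfrac{\pi\psi}{q(\cdot,\theta^{\ast})}\circ T_{\theta})(X)]\big|^2 \,\Big|\, \widehat{\theta}_t = \theta\Big],
\end{equation}
and, exactly as in \cref{term:HandleConditionalExpectation}, the independence of the RQMC point set $\{u_i^t\}$ from $\widehat{\theta}_t$ lets me drop the conditioning. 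By hypothesis $\tfrac{\pi\psi}{q(\cdot,\theta^{\ast})}\circ T_{\theta} \in G_e(M,B,k)$ for every $\theta\in\Theta$, so I can apply \cref{lemma:L2-convergence} (equivalently \cref{thm:Lq-convergence} with $q=2$) to get the bound $\mathcal{O}(N_t^{-2}(\log N_t)^{3d-2}\exp\{2M(8d\log N_t)^{k/2}\})$ uniformly over $\theta$. Taking the outer expectation over $\widehat{\theta}_t$ preserves the bound, and taking square roots yields the stated rate $\mathcal{O}(N_t^{-1}(\log N_t)^{3d/2-1}\exp\{M(8d\log N_t)^{k/2}\})$, with the big-$\mathcal{O}$ constant depending only on $M,B,k,d$ (no dependence on $D$ here, since $\psi$ is scalar).

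There is no real obstacle: the only subtle point — the same one flagged in the proof of \cref{thm:ThetaError} — is the legitimacy of removing the conditioning, which rests on the RQMC randomization at stage $t$ being independent of everything used to form $\widehat{\theta}_t$ (the samples and randomizations at stages $1,\dots,t-1$), together with the uniformity of the $G_e(M,B,k)$ bound over all $\theta\in\Theta$. Because the supremum in \cref{lemma:L2-convergence} is over the whole class $G_e(M,B,k)$, the resulting constant does not depend on which $\theta$ is realized, so the bound survives integration against the law of $\widehat{\theta}_t$. I would present this as a short remark rather than belabor it, since it is identical in form to the argument already given for the parameter error.
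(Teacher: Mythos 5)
Your proposal is correct and follows exactly the route the paper intends: the paper proves this result by the same argument as \cref{thm:ThetaError} (rewrite $\widehat{\pi}_t^{\ast}(\psi)$ as $\widehat{I}_{N_t}$ of $\frac{\pi\psi}{q(\cdot,\theta^{\ast})}\circ T_{\widehat{\theta}_t}$, condition on $\widehat{\theta}_t$, use independence of the stage-$t$ RQMC points to drop the conditioning, and apply the uniform $L^2$ bound over $G_e(M,B,k)$ with $q=2$). No gaps; the identification of $I_{\psi}^{\ast}(\widehat{\theta}_t)$ with $\mathbb{E}[(\frac{\pi\psi}{q(\cdot,\theta^{\ast})}\circ T_{\widehat{\theta}_t})(X)]$ and the uniformity over $\theta$ are handled correctly.
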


\begin{corollary}\label{cor:Auxiliary-discrepancy}
Under the same condition as in \cref{thm:pi-convergence}, for any $\epsilon>0$, we have
\begin{align}
\left(\mathbb{E}|\widehat{\Pi}_{T}^{\ast}(\psi) - I|^2\right)^{\frac{1}{2}} = \mathcal{O}\left(\bar{N}_T^{-1+\epsilon}\right), \notag   
\end{align}
where the constant in the big-$\mathcal{O}$ constant depends on $M,B,k,d,\epsilon$.
\end{corollary}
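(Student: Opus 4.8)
The plan is to start from the error decomposition already derived in \cref{subsec:MAMIS}, namely
\[
\left|\widehat{\Pi}_{T}^{\ast}(\psi) -  I\right| \leq \sum_{t=1}^{T} \frac{N_t}{\Omega_T}\Big[\big|\widehat{\pi}_{t}^{\ast}(\psi) - I_{\psi}^{\ast}(\widehat{\theta}_t)\big| + \big|I_{\psi}^{\ast}(\widehat{\theta}_t)- I_{\psi}^{\ast}(\theta^{\ast})\big|\Big],
\]
and to apply Minkowski's inequality for the $L^2$ norm to the right-hand side, obtaining
\[
\Big(\mathbb{E}\big|\widehat{\Pi}_{T}^{\ast}(\psi) - I\big|^2\Big)^{\frac{1}{2}} \leq \sum_{t=1}^{T} \frac{N_t}{\Omega_T}\left[\Big(\mathbb{E}\big|\widehat{\pi}_{t}^{\ast}(\psi) - I_{\psi}^{\ast}(\widehat{\theta}_t)\big|^2\Big)^{\frac{1}{2}} + \Big(\mathbb{E}\big|I_{\psi}^{\ast}(\widehat{\theta}_t)- I_{\psi}^{\ast}(\theta^{\ast})\big|^2\Big)^{\frac{1}{2}}\right].
\]
This reduces the problem to controlling a convex combination, with weights $N_t/\Omega_T$, of the two per-stage RMSEs already bounded in \cref{thm:pi-convergence} and \cref{thm:AuxiliaryIntegralError}.

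Next I would fix an arbitrary $\epsilon\in(0,1)$ and use \cref{thm:pi-convergence} together with \cref{thm:AuxiliaryIntegralError} to bound each of the two per-stage terms by $\mathcal{O}\big(N_t^{-1}(\log N_t)^{3d/2-1}\exp\{M(8d\log N_t)^{k/2}\}\big)$, and then absorb the logarithmic and sub-exponential factors into a power of $N_t$. By the limit computation in the Remark following \cref{thm:Lq-convergence}, the map $N \mapsto N^{-\epsilon}(\log N)^{3d/2-1}\exp\{M(8d\log N)^{k/2}\}$ tends to $0$ as $N\to\infty$ and is finite for every admissible sample size, hence bounded; thus there is a constant $C_\epsilon$, independent of $t$, with
\[
\Big(\mathbb{E}\big|\widehat{\pi}_{t}^{\ast}(\psi) - I_{\psi}^{\ast}(\widehat{\theta}_t)\big|^2\Big)^{\frac{1}{2}} + \Big(\mathbb{E}\big|I_{\psi}^{\ast}(\widehat{\theta}_t)- I_{\psi}^{\ast}(\theta^{\ast})\big|^2\Big)^{\frac{1}{2}} \leq C_\epsilon\, N_t^{-1+\epsilon}.
\]
The key point making this legitimate is that the constants in \cref{thm:pi-convergence} and \cref{thm:AuxiliaryIntegralError} depend only on $M,B,k,d,D,L$ and not on $t$. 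Substituting this into the previous display gives $\big(\mathbb{E}|\widehat{\Pi}_{T}^{\ast}(\psi) - I|^2\big)^{1/2} \le C_\epsilon \sum_{t=1}^{T} \frac{N_t}{\Omega_T} N_t^{-1+\epsilon} = \frac{C_\epsilon}{\Omega_T}\sum_{t=1}^{T} N_t^{\epsilon}$.

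Finally I would convert $\frac{1}{\Omega_T}\sum_{t=1}^{T} N_t^{\epsilon}$ into a power of $\bar{N}_T$. Since $x\mapsto x^{\epsilon}$ is concave on $[0,\infty)$ for $\epsilon\in(0,1)$, Jensen's inequality (equivalently the power-mean inequality) gives $\frac{1}{T}\sum_{t=1}^{T} N_t^{\epsilon} \le \big(\frac{1}{T}\sum_{t=1}^{T} N_t\big)^{\epsilon} = \bar{N}_T^{\epsilon}$, so that $\sum_{t=1}^{T} N_t^{\epsilon} \le T\bar{N}_T^{\epsilon}$. Combining this with $\Omega_T = T\bar{N}_T$ yields $\frac{1}{\Omega_T}\sum_{t=1}^{T} N_t^{\epsilon} \le \frac{T\bar{N}_T^{\epsilon}}{T\bar{N}_T} = \bar{N}_T^{-1+\epsilon}$, which is the asserted rate; since $\epsilon>0$ was arbitrary the corollary follows. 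I expect the only genuinely delicate step — the one to state carefully rather than treat as a routine calculation — to be the uniform-in-$t$ absorption of the $(\log N_t)^{3d/2-1}\exp\{M(8d\log N_t)^{k/2}\}$ factor into $N_t^{\epsilon}$; the rest is just Minkowski's inequality and concavity of $x^{\epsilon}$.
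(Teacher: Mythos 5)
Your proposal is correct and follows essentially the same route as the paper: Minkowski's inequality applied to the weighted average $\sum_t \frac{N_t}{\Omega_T}(\widehat{\pi}_t^{\ast}(\psi)-I)$, the per-stage bounds from \cref{thm:pi-convergence} and \cref{thm:AuxiliaryIntegralError}, absorption of the $(\log N_t)^{3d/2-1}\exp\{M(8d\log N_t)^{k/2}\}$ factor into $N_t^{\epsilon}$ uniformly in $t$, and Jensen's inequality for the concave map $x\mapsto x^{\epsilon}$ to conclude $\frac{1}{\Omega_T}\sum_t N_t^{\epsilon}\lesssim \bar{N}_T^{-1+\epsilon}$. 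Your explicit justification of the uniform-in-$t$ absorption step is in fact slightly more careful than the paper's "$\lesssim$" shorthand.
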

\begin{proof}
By the triangle inequality of norm $(\mathbb{E}|\cdot|)^{\frac{1}{2}}$, for any $0<\epsilon<1$ we have
\begin{align}
\left(\mathbb{E}|\widehat{\Pi}_{T}^{\ast}(\psi) - I|^2\right)^{\frac{1}{2}}
= & \left(\mathbb{E}\left|\sum_{t=1}^T \frac{N_t}{\Omega_T} \widehat{\pi}_{t}^{\ast}(\psi) - I_{\psi}^{\ast}(\theta^{\ast})\right|^2\right)^{\frac{1}{2}} \notag\\
\leq & \sum_{t=1}^T \frac{N_t}{\Omega_T}\left(\mathbb{E}\left| \widehat{\pi}_{t}^{\ast}(\psi) - I_{\psi}^{\ast}(\theta^{\ast})\right|^2\right)^{\frac{1}{2}} \notag\\
\leq & \sum_{t=1}^T \frac{N_t}{\Omega_T}\left[\left(\mathbb{E}\left| \widehat{\pi}_{t}^{\ast}(\psi) - I_{\psi}^{\ast}(\widehat{\theta}_t)\right|^2\right)^{\frac{1}{2}} + \left(\mathbb{E}\left| I_{\psi}^{\ast}(\widehat{\theta}_t) - I_{\psi}^{\ast}(\theta^{\ast}) \right|^2\right)^{\frac{1}{2}}  \right]\notag\\
\lesssim & \frac{1}{{\Omega_T}}\sum_{t=1}^T (\log N_t)^{\frac{3d}{2}-1}\exp{\{M(8d\log N_t)^{\frac{k}{2}}\}} \label{term:usingpi-convergence}\\
\lesssim & \frac{1}{{\Omega_T}}\sum_{t=1}^T N_t^{\epsilon} \lesssim  (\frac{\Omega_T}{T})^{-1+\epsilon} = \bar{N}_T^{-1+\epsilon}, \notag
\end{align}
where \cref{term:usingpi-convergence} is derived from \cref{thm:pi-convergence} combined with \cref{thm:AuxiliaryIntegralError} and the last inequality is obtained by applying the Jensen's inequality to the concave function $f(x) = x^{\epsilon}$.
\end{proof}

\subsection{Discrepancy between the MAMIS estimator and the auxiliary estimator}
For the error term $|\widehat{\Pi}_{T}^{\mathrm{MAMIS}}(\psi) - \widehat{\Pi}_{T}^{\ast}(\psi)|$, let 
\begin{align}
\xi_T(\boldsymbol{x}) &:= \frac{1}{\Omega_T} \sum_{l=1}^T N_l q\left(\boldsymbol{x}, \hat{\theta}_l\right). \notag 
\end{align}
 By substituting \cref{term:MAMISEsimator} and \cref{def:Pi_T^{star}}, it follows that
\begin{align}
& \left(\mathbb{E}\left|\widehat{\Pi}_T^{\text{MAMIS}}(\psi)-\widehat{\Pi}_T^*(\psi)\right|^2\right)^{\frac{1}{2}} \notag\\
= & \left(\mathbb{E}\left|\frac{1}{\Omega_T} \sum_{t=1}^T \sum_{i=1}^{N_t} \frac{\pi\left(X_i^t\right) \psi\left(X_i^t\right)}{\xi_T\left(X_i^t\right)}-\frac{1}{\Omega_T} \sum_{t=1}^T \sum_{i=1}^{N_t} \frac{\pi\left(X_i^t\right) \psi\left(X_i^t\right)}{q\left(X_i^t, \theta^*\right)}\right|^2\right)^{\frac{1}{2}}  \notag\\
\leq & \frac{1}{\Omega_T} \sum_{t=1}^T \sum_{i=1}^{N_t}\left({\mathbb{E}\left|\frac{\pi\left(X_i^t\right) \psi\left(X_i^t\right)}{\xi_T\left(X_i^t\right) q\left(X_i^t, \theta^{\ast}\right)}\left(\xi_T\left(X_i^t\right)-q\left(X_i^t, \theta^{\ast}\right)\right)\right|^2}\right)^{\frac{1}{2}} \notag\\
= & \frac{1}{\Omega_T} \sum_{t=1}^T \sum_{i=1}^{N_t}\left({\mathbb{E}\left|\frac{g(X_i^t)}{\xi_T\left(X_i^t\right) }\left(\xi_T\left(X_i^t\right)-q\left(X_i^t, \theta^{\ast}\right)\right)\right|^2}\right)^{\frac{1}{2}}, \label{term:TwoEstimatorsDiscrepancy}
\end{align}
where we use the notation $g(\boldsymbol{x}) = \frac{\pi(\boldsymbol{x})\psi(\boldsymbol{x})}{q(\boldsymbol{x}, \theta^{\ast})}$.
The analysis of the expectation term
\begin{equation}\label{term:SummandsInMAMIS-auxiliary}
\left({\mathbb{E}\left|\frac{g(X_i^t)}{\xi_T\left(X_i^t\right) }\left(\xi_T\left(X_i^t\right)-q\left(X_i^t, \theta^{\ast}\right)\right)\right|^2}\right)^{\frac{1}{2}}
\end{equation}
proceeds by 
\begin{align}
\left|\xi_T\left(X_i^t\right)-q\left(X_i^t, \theta^*\right)\right|
& = \left|\sum_{l=1}^T \frac{N_l}{\Omega_T} q\left(X_i^t, \widehat{\theta}_l\right)-q\left(X_i^t, \theta^{\ast}\right)\right| \notag\\
& =\left|\sum_{l=1}^T \frac{N_l}{\Omega_T}\left[q\left(X_i^t, \widehat{\theta}_l\right)-q\left(X_i^t, \theta^{\ast}\right)\right]\right| \notag\\
& \leq \sum_{l=1}^T \frac{N_l}{\Omega_T}\left|q\left(X_i^t, \widehat{\theta}_l\right)-q\left(X_i^t, \theta^{\ast}\right)\right| \notag\\
& \leq L\sum_{l=1}^T \frac{N_l}{\Omega_T} \lVert \widehat{\theta}_l-\theta^{\ast}\rVert , \notag
\end{align}
where the last inequality is derived from the Lipschitz assumption \cref{assumption:lipschitz}. Then the H\"older's inequality tells us \cref{term:SummandsInMAMIS-auxiliary} is not greater than
\begin{align}
& \Bigg(\mathbb{E}\Bigg[\Bigg(\Big|\frac{g(X_i^t)}{\xi_T(X_i^t) }\Big| \cdot L \sum_{l=1}^T \frac{N_l}{\Omega_T} \Big\|\widehat{\theta}_l-\theta^{\ast}\Big\|\Bigg)^2\Bigg]\Bigg)^{\frac{1}{2}} \notag\\
\leq & L \sum_{l=1}^T \frac{N_l}{\Omega_T} \Bigg(\mathbb{E}\left|\frac{g(X_i^t)}{\xi_T(X_i^t) }\right|^2\left\|\widehat{\theta}_l-\theta^*\right\|^2\Bigg)^{\frac{1}{2}} \notag\\
\leq & L \sum_{l=1}^{T} \frac{N_l}{\Omega_T}\left(\mathbb{E}\left|\frac{g(X_i^t)}{\xi_T(X_i^t) }\right|^{2u}\right)^{\frac{1}{2u}} \left(\mathbb{E}\left\|\widehat{\theta}_l-\theta^{\ast}\right\|^{2v}\right)^{\frac{1}{2v}}, \label{term:byHolder}
\end{align}
where $u=1+\frac{\eta}{2}, v = \frac{2+\eta}{\eta}$ and $\eta$ appears in \cref{assumption:moment}.
By the law of total expectation, 
\begin{align}
\mathbb{E}\left|\frac{g(X_i^t)}{\xi_T(X_i^t)}\right|^{2u} & \leq \mathbb{E}\left|\frac{g(X_i^t)}{m(X_i^t)}\right|^{2u} =\mathbb{E}\left[\mathbb{E}\left[\left|\frac{g(X_i^t)}{m(X_i^t)}\right|^{2+\eta} \Bigg| \widehat{\theta}_t\right]\right]. \label{term:moment conditional expectation}
\end{align}
Note that for any $\theta \in \Theta$, we have
\begin{align}
\mathbb{E}\left[\left.\left|\frac{g(X_i^t)}{m(X_i^t)}\right|^{2+\eta} \right\rvert\, \widehat{\theta}_t=\theta\right] \notag
& =\int_{\mathcal{X}} \left|\frac{g(\boldsymbol{x})}{m(\boldsymbol{x})} \right|^{2+\eta} q(\boldsymbol{x}, \theta) \mathrm{d} x \notag\\
& \leq \int_{\mathcal{X}} \left|\frac{g(\boldsymbol{x})}{m(\boldsymbol{x})}\right|^{2+\eta} \left(q\left(\boldsymbol{x}, \theta^{\ast}\right)+L \lVert\theta - \theta^{\ast} \rVert\right) \mathrm{d} \boldsymbol{x}, \notag
\end{align}
where for the last inequality, we use the Lipschitz assumption \cref{assumption:lipschitz}. 
Combining \cref{term:moment conditional expectation} and using the boundedness assumption of $q(\cdot,\theta^{\ast})$, we obtain 
\begin{align}
\mathbb{E}\Big|\frac{g(X_i^t)}{\xi_T(X_i^t)}\Big|^{2u} 
\leq \left({\lVert q(\cdot,\theta^{\ast}) \rVert}_{\infty}+L\mathbb{E} \lVert\widehat{\theta}_t - \theta^{\ast} \rVert\right)\int_{\mathcal{X}} \left|\frac{g(\boldsymbol{x})}{m(\boldsymbol{x})}\right|^{2+\eta}  \mathrm{d}\boldsymbol{x}. \notag
\end{align}
Since the right hand side of inequality \cref{term:ThetaError} is bounded as a function with respect to $N_t$, it follows that we can bound $\mathbb{E}\lVert\widehat{\theta}_t - \theta^{\ast} \rVert$ by a constant which is independent of $t$. Combining‌ the assumption \cref{assumption:moment}, we obtain $\mathbb{E}\Big|\frac{g(X_i^t)}{\xi_T(X_i^t)}\Big|^{2u}$ can be bounded by a constant which is independent of $t$.

Consequently, from \cref{term:byHolder} and \cref{thm:ThetaError}, we can bound \cref{term:SummandsInMAMIS-auxiliary} as follows.
\begin{align}
& \left({\mathbb{E}\left|\frac{g(X_i^t)}{\xi_T\left(X_i^t\right) }\left(\xi_T\left(X_i^t\right)-q\left(X_i^t, \theta^{\ast}\right)\right)\right|^2}\right)^{\frac{1}{2}} \notag\\
\lesssim & \sum_{l=1}^T \frac{N_l}{\Omega_T}\left[N_l^{-1}(\log N_l)^{\frac{3d}{2}-1}\exp{\{M(8vd\log N_l)^{\frac{k}{2}}\}}\right] \notag\\
\lesssim & \frac{1}{\Omega_T}\sum_{l=1}^T N_l^{\epsilon} \lesssim  (\frac{\Omega_T}{T})^{-1+\epsilon} = \bar{N}_T^{-1+\epsilon}, \forall \epsilon>0, \notag
\end{align}
where we apply the Jensen's inequality to the concave function $f(x) = x^\epsilon$ in the last inequality. As a result, we can also bound the right hand side of \cref{term:TwoEstimatorsDiscrepancy} by $\mathcal{O}(\bar{N}_T^{-1+\epsilon})$.

Based on the preceding discussion, we can obtain the following result for the error term $|\widehat{\Pi}_{T}^{\mathrm{MAMIS}}(\psi) - \widehat{\Pi}_{T}^{\ast}(\psi)|$.
\begin{theorem}\label{thm:AuxiliaryEstimateError}
Under \cref{assumption:all}, for any $\epsilon>0$, we have
\begin{equation}
 \left(\mathbb{E}\left|\widehat{\Pi}_T^{\mathrm{MAMIS}}(\psi)-\widehat{\Pi}_T^*(\psi)\right|^2\right)^{\frac{1}{2}} = \mathcal{O}(\bar{N}_T^{-1+\epsilon}),  \notag  
\end{equation}
where the constant in the big-$\mathcal{O}$ bound depends on $M,B,k,d,D,L,\eta,\epsilon$.
\end{theorem}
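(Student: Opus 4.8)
The plan is to organize the argument already sketched above \cref{thm:AuxiliaryEstimateError} into a self-contained proof. First I would substitute the explicit forms of $\widehat{\Pi}_T^{\mathrm{MAMIS}}(\psi)$ from \cref{term:MAMISEsimator} and $\widehat{\Pi}_T^{\ast}(\psi)$ from \cref{def:Pi_T^{star}}. Both estimators are averages over the \emph{same} samples $\{X_i^t\}$ with the common numerator $\pi(X_i^t)\psi(X_i^t)$, differing only in whether the denominator is $\xi_T(X_i^t)=\Omega_T^{-1}\sum_{l=1}^T N_l q(X_i^t,\widehat{\theta}_l)$ or $q(X_i^t,\theta^{\ast})$, so the difference decouples term by term: each pair $(t,i)$ contributes $\frac{g(X_i^t)}{\xi_T(X_i^t)}\big(\xi_T(X_i^t)-q(X_i^t,\theta^{\ast})\big)$ with $g=\pi\psi/q(\cdot,\theta^{\ast})$. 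Applying the triangle inequality for the norm $(\mathbb{E}|\cdot|^2)^{1/2}$ reduces the problem to bounding each $\big(\mathbb{E}\big|\frac{g(X_i^t)}{\xi_T(X_i^t)}(\xi_T(X_i^t)-q(X_i^t,\theta^{\ast}))\big|^2\big)^{1/2}$ \emph{uniformly} over $(t,i)$, since there are $\Omega_T$ such terms weighted by $\Omega_T^{-1}$; this is \cref{term:TwoEstimatorsDiscrepancy} and \cref{term:SummandsInMAMIS-auxiliary}.

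Second, I would control $|\xi_T(X_i^t)-q(X_i^t,\theta^{\ast})|$. Since $\xi_T$ is a convex combination of the densities $q(\cdot,\widehat{\theta}_l)$, the Lipschitz condition \cref{assumption:lipschitz} gives $|\xi_T(X_i^t)-q(X_i^t,\theta^{\ast})|\le L\sum_{l=1}^T\frac{N_l}{\Omega_T}\lVert\widehat{\theta}_l-\theta^{\ast}\rVert$. Plugging this in, the remaining task is to decouple the random ratio $g(X_i^t)/\xi_T(X_i^t)$ from the parameter errors $\lVert\widehat{\theta}_l-\theta^{\ast}\rVert$; I would do this via H\"older's inequality with conjugate exponents $u=1+\eta/2$ and $v=(2+\eta)/\eta$, chosen so that $2u=2+\eta$ matches the moment exponent in \cref{assumption:moment}. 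This produces the bound \cref{term:byHolder}: a sum over $l$ of $\frac{N_l}{\Omega_T}\big(\mathbb{E}|g(X_i^t)/\xi_T(X_i^t)|^{2+\eta}\big)^{1/(2+\eta)}\big(\mathbb{E}\lVert\widehat{\theta}_l-\theta^{\ast}\rVert^{2v}\big)^{1/(2v)}$.

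Third, I would bound the two factors separately. For the ratio factor, since $\xi_T(\boldsymbol{x})\ge m(\boldsymbol{x})>0$ by \cref{assumption:min-positive}, we have $|g/\xi_T|\le|g/m|$; conditioning on $\widehat{\theta}_t=\theta$ turns $\mathbb{E}|g(X_i^t)/m(X_i^t)|^{2+\eta}$ into $\int|g/m|^{2+\eta}q(\cdot,\theta)\,\mathrm{d}\boldsymbol{x}$, which by \cref{assumption:lipschitz} and the boundedness of $q(\cdot,\theta^{\ast})$ is at most $\big(\lVert q(\cdot,\theta^{\ast})\rVert_\infty+L\lVert\theta-\theta^{\ast}\rVert\big)\int|g/m|^{2+\eta}\,\mathrm{d}\boldsymbol{x}$; taking expectation over $\widehat{\theta}_t$ and using that $\mathbb{E}\lVert\widehat{\theta}_t-\theta^{\ast}\rVert$ is bounded uniformly in $t$ (a consequence of the right-hand side of \cref{term:ThetaError} being a bounded function of $N_t$) together with \cref{assumption:moment}, this factor is bounded by a constant independent of $t,i,l$. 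For the parameter-error factor, I would invoke \cref{thm:ThetaError} with $q=2v$ to get $\big(\mathbb{E}\lVert\widehat{\theta}_l-\theta^{\ast}\rVert^{2v}\big)^{1/(2v)}\lesssim N_l^{-1}(\log N_l)^{3d/2-1}\exp\{M(8vd\log N_l)^{k/2}\}$. Combining, each summand of \cref{term:SummandsInMAMIS-auxiliary} is $\lesssim \frac{1}{\Omega_T}\sum_{l=1}^T(\log N_l)^{3d/2-1}\exp\{M(8vd\log N_l)^{k/2}\}$; by the Remark following \cref{thm:Lq-convergence} the $l$-th summand is $\lesssim N_l^{\epsilon}$ for any $\epsilon>0$, and Jensen's inequality applied to the concave map $x\mapsto x^{\epsilon}$ (with $0<\epsilon<1$) gives $\frac{1}{\Omega_T}\sum_l N_l^{\epsilon}=\frac{T}{\Omega_T}\cdot\frac{1}{T}\sum_l N_l^{\epsilon}\le\frac{T}{\Omega_T}\big(\frac{1}{T}\sum_l N_l\big)^{\epsilon}=\bar{N}_T^{-1+\epsilon}$. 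Since this bound is uniform over $(t,i)$, summing the $\Omega_T$ terms against $\Omega_T^{-1}$ in \cref{term:TwoEstimatorsDiscrepancy} preserves the rate and finishes the proof.

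The main obstacle is the third step: showing that the $(2+\eta)$-th moment of the random ratio $g(X_i^t)/\xi_T(X_i^t)$ is bounded uniformly in $t$. The subtlety is that $X_i^t\sim Q(\widehat{\theta}_t)$ while the denominator $\xi_T$ mixes in \emph{all} the learned parameters $\widehat{\theta}_1,\dots,\widehat{\theta}_T$, so one cannot integrate directly; the resolution is the lower bound $\xi_T\ge m$ from \cref{assumption:min-positive}, which decouples the denominator from the sampling iteration, followed by conditioning on $\widehat{\theta}_t$, the Lipschitz control of $q(\cdot,\theta)$, and the uniform-in-$t$ bound on $\mathbb{E}\lVert\widehat{\theta}_t-\theta^{\ast}\rVert$ inherited from \cref{thm:ThetaError}, all underpinned by the moment assumption \cref{assumption:moment}. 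Everything else is routine: Minkowski, H\"older, and the Jensen/Remark argument that absorbs the factors exponential in $\log N_l$ into an arbitrarily small power $N_l^{\epsilon}$.
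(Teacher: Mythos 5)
Your proposal is correct and follows essentially the same route as the paper: the same term-by-term decomposition via \cref{term:TwoEstimatorsDiscrepancy}, the Lipschitz bound on $\xi_T - q(\cdot,\theta^{\ast})$, H\"older with $u=1+\eta/2$, $v=(2+\eta)/\eta$, the uniform moment bound on $g/\xi_T$ via $\xi_T\ge m$ and conditioning on $\widehat{\theta}_t$, \cref{thm:ThetaError} with exponent $2v$, and the Jensen argument yielding $\bar{N}_T^{-1+\epsilon}$. No gaps to report.
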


Now we can prove \cref{thm:MAMIS-RQMC}.
\begin{proof}[Proof of \cref{thm:MAMIS-RQMC}]
Combining \cref{cor:Auxiliary-discrepancy}, \cref{thm:AuxiliaryEstimateError}, we obtain
\begin{align}
\Big(\mathbb{E}\Big|\widehat{\Pi}_T^{\mathrm{MAMIS}}(\psi) - I\Big|^2\Big)^{\frac{1}{2}} 
& \leq  \Big(\mathbb{E}\Big|\widehat{\Pi}_T^{\mathrm{MAMIS}}(\psi) - \widehat{\Pi}_T^*(\psi)\Big|^2\Big)^{\frac{1}{2}} +  \Big(\mathbb{E}\Big|\widehat{\Pi}_T^*(\psi) - I\Big|^2\Big)^{\frac{1}{2}}\notag\\
& = \mathcal{O}(\bar{N}_T^{-1+\epsilon}), \notag
\end{align}
where the constant in the big-$\mathcal{O}$ bound depends on $M,B,k,d,D,L,\eta,\epsilon$.
\end{proof}

\section{Convergence Rate of the MC-based MAMIS Estimator}
\label{sec:MC_convergence}
It is worth noting that Marin et al.\cite{Marin2019} only provided the consistency of the MC-based MAMIS, but did not give the convergence rate with respect to any quantity. It is observed that in our analytical framework under the RQMC setting, all error analyses are based on \cref{thm:Lq-convergence}. We can obtain a parallel version of \cref{thm:Lq-convergence} for the MC sampling case by using the Marcinkiewicz–Zygmund inequality (see \cite{Ferger2014M-Zineq}), thereby establishing the convergence order results of the MC-based MAMIS estimator. In this section, $X$ still denotes a random variable following the standard normal distribution.

\begin{lemma}\label{lemma:Lq-convergence-MC}
Given $q \geq 2$, then for function $f$ satistied $A := \E[|f(X)|^q] < \infty$ and the independent and identically distributed‌ ($i.i.d.$) point set $\left\{y_{1},\ldots, y_{n}\right\}$ following $\mathcal{U}[0,1)^d$, we have
\begin{equation}
 \E\left[\left\lvert \widehat{I}_{n}(f)-\E[f(X)]\right\lvert^{q}\right] = \mathcal{O}(n^{-\frac{q}{2}}),\notag 
\end{equation}
where $\widehat{I}_{n}(f)$ is given in \cref{term:RQMCEstimator_ToIntroducePQMC} and the constant in the big-$\mathcal{O}$ bound depends on $q,A$.
\end{lemma}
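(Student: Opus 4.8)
The plan is to reduce the claim to the classical Marcinkiewicz--Zygmund (M--Z) inequality. Observe that $\widehat{I}_n(f) - \E[f(X)]$ is an average of $n$ i.i.d.\ centered random variables: setting $Z_j := f\circ\Phi^{-1}(y_j) - \E[f(X)]$, we have $\E[Z_j] = 0$ (since $\Phi^{-1}(y_j)$ is a standard normal vector when $y_j \sim \mathcal{U}[0,1)^d$, the transformed variable $f\circ\Phi^{-1}(y_j)$ has the same law as $f(X)$), and $\widehat{I}_n(f) - \E[f(X)] = \frac1n\sum_{j=1}^n Z_j$. So the target quantity is $\E\big[\big|\frac1n\sum_{j=1}^n Z_j\big|^q\big] = n^{-q}\,\E\big[\big|\sum_{j=1}^n Z_j\big|^q\big]$.

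The key step is then to apply the M--Z inequality to the i.i.d.\ sequence $\{Z_j\}$: there is a constant $C_q$ depending only on $q$ such that
\begin{equation}\notag
\E\left[\Big|\sum_{j=1}^n Z_j\Big|^q\right] \leq C_q\, \E\left[\Big(\sum_{j=1}^n Z_j^2\Big)^{q/2}\right].
\end{equation}
Since $q \geq 2$, the function $x \mapsto x^{q/2}$ is convex, so by Jensen's inequality applied to the uniform average over $j$, $\big(\frac1n\sum_j Z_j^2\big)^{q/2} \leq \frac1n\sum_j (Z_j^2)^{q/2} = \frac1n\sum_j |Z_j|^q$, hence $\E\big[(\sum_j Z_j^2)^{q/2}\big] \leq n^{q/2-1}\sum_j \E[|Z_j|^q] = n^{q/2}\,\E[|Z_1|^q]$. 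Finally $\E[|Z_1|^q] = \E\big[|f(X) - \E[f(X)]|^q\big] \leq 2^{q-1}\big(\E[|f(X)|^q] + |\E[f(X)]|^q\big) \leq 2^q A$ (using $|\E[f(X)]|^q \leq \E[|f(X)|^q] = A$ by Jensen). Combining, $\E\big[|\widehat{I}_n(f) - \E[f(X)]|^q\big] \leq n^{-q}\cdot C_q \cdot n^{q/2}\cdot 2^q A = \mathcal{O}(n^{-q/2})$, with the implied constant depending only on $q$ and $A$.

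There is no real obstacle here; the only points requiring minor care are (i) checking that $f\circ\Phi^{-1}(y_j)$ genuinely has the law of $f(X)$ so that the summands are centered and integrable with the $q$-th moment $A$ finite, and (ii) getting the direction of Jensen's inequality right when bounding $\E[(\sum_j Z_j^2)^{q/2}]$ — one needs the normalized sum inside before raising to the power $q/2$. The whole argument is short and the main content is really just citing \cite{Ferger2014M-Zineq} for the M--Z bound. This lemma then serves as the MC analogue of \cref{thm:Lq-convergence}, feeding into MC versions of \cref{thm:ThetaError}, \cref{thm:AuxiliaryIntegralError}, \cref{thm:pi-convergence}, and \cref{thm:AuxiliaryEstimateError} in the same way, yielding an RMSE rate of $\mathcal{O}(\bar N_T^{-1/2})$ for the MC-based MAMIS estimator.
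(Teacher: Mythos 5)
Your proposal is correct and follows essentially the same route as the paper: center the i.i.d.\ summands $f\circ\Phi^{-1}(y_j)-\E[f(X)]$, bound their $q$-th moment by $2^qA$ via Jensen, and invoke the Marcinkiewicz--Zygmund inequality to get the factor $n^{q/2-1}\sum_j\E[|Z_j|^q]$ (the paper cites this form of M--Z directly, whereas you derive it from the $\E[(\sum_j Z_j^2)^{q/2}]$ form plus convexity, which is the same standard argument spelled out). No gaps.
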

\begin{proof}
Let $Y_j = f \circ \Phi^{-1}(y_j) - \mathbb{E}[f(X)]$ for $j=1,2,\ldots,n$. Then $Y_1,\ldots,Y_n$ are $i.i.d.$ random variables with zero mean. Using the Jensen's inequality, we have
\begin{align}
\E[|Y_j|^q] &\leq 2^{q-1}\E\Big[|f\circ\Phi^{-1}(y_j)|^q+|\E[f(X)]|^q\Big] \notag\\
& = 2^{q-1}(\E[|f(X)|^q]+|\E[f(X)]|^q) \notag\\
& \leq 2^q\E[|f(X)|^q]  = (2A)^q. \notag
\end{align}
Hence we derive
\begin{align}
\E\Big[\left\lvert \widehat{I}_{n}(f)-\E[f(X)]\right\lvert^{q}\Big]
=& \frac{1}{n^q}\E\Big[\Big\lvert\sum_{j=1}^n Y_j\Big\rvert^q \Big] \notag \\
\leq & \frac{1}{n^q}C_qn^{\frac{q}{2}-1} \sum_{j=1}^n\E\Big[\Big\lvert  Y_j\Big\rvert^q \Big] 
\leq (2A)^qC_qn^{-\frac{q}{2}}, \label{term:usingM-Zineq}
\end{align}
where we use the Marcinkiewicz–Zygmund inequality in \cref{term:usingM-Zineq}. 
\end{proof}

By using this lemma and following an analysis process similar to that in the RQMC case, we can obtain the convergence order result of the MC-based MAMIS estimator.
\begin{theorem}
Under \cref{assumption:all}, if there exists a positive constant $A<\infty$, such that $\E[|\frac{\pi \psi}{q(\cdot,\theta^{\ast})}\circ T_{\theta}(X)|^2]$, $\E[|\frac{\pi h^{(j)}}{q(\cdot,\theta^{\ast})}\circ T_{\theta}(X)|^{\frac{4+2\eta}{\eta}}] \leq A$ $(\forall \theta \in \Theta, j = 1,\ldots,d)$, then the MC-based MAMIS estimator has an RMSE as
\begin{align}
 \Big(\mathbb{E}\Big|\widehat{\Pi}_T^{\mathrm{MAMIS}}(\psi) - I\Big|^2\Big)^{\frac{1}{2}}
= \mathcal{O}(\bar{N}_T^{-1/2}),\notag
\end{align}
where the constant in the big-$\mathcal{O}$ bound depends on $L,\eta,A$.
\end{theorem}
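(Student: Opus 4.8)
The plan is to mirror the RQMC analysis of \cref{thm:MAMIS-RQMC} step by step, replacing each invocation of \cref{thm:Lq-convergence} with \cref{lemma:Lq-convergence-MC}. Concretely, I would first re-derive the three building-block error estimates in the MC setting. For the parameter error, repeating the conditional-expectation argument of \cref{thm:ThetaError} but applying \cref{lemma:Lq-convergence-MC} to $h^{(j)}_\theta\circ T_\theta$ (which has finite $q$-th moment with $q = \frac{4+2\eta}{\eta} = 2v$ by the hypothesis $\E[|\tfrac{\pi h^{(j)}}{q(\cdot,\theta^\ast)}\circ T_\theta(X)|^{(4+2\eta)/\eta}]\le A$) yields $\bigl(\E\|\widehat\theta_{t+1}-\theta^\ast\|^{2v}\bigr)^{1/(2v)} = \mathcal{O}(N_t^{-1/2})$, and in particular the $L^2$ bound $\bigl(\E\|\widehat\theta_{t+1}-\theta^\ast\|^2\bigr)^{1/2}=\mathcal{O}(N_t^{-1/2})$. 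For the operator error $|I_\psi^\ast(\widehat\theta_t)-I_\psi^\ast(\theta^\ast)|$, the Lipschitz-based bound \cref{term:Operator property} together with $\int_\mathcal{X}|g|\,\mathrm{d}\boldsymbol{x}<\infty$ (guaranteed by \cref{assumption:all}) and the new $L^2$ parameter bound gives $\bigl(\E|I_\psi^\ast(\widehat\theta_t)-I_\psi^\ast(\theta^\ast)|^2\bigr)^{1/2}=\mathcal{O}(N_t^{-1/2})$. For the auxiliary-estimator error $|\widehat\pi_t^\ast(\psi)-I_\psi^\ast(\widehat\theta_t)|$, the same conditioning trick as in \cref{thm:pi-convergence} plus \cref{lemma:Lq-convergence-MC} applied to $\tfrac{\pi\psi}{q(\cdot,\theta^\ast)}\circ T_\theta$ (finite second moment by the $\E[|\tfrac{\pi\psi}{q(\cdot,\theta^\ast)}\circ T_\theta(X)|^2]\le A$ hypothesis) gives $\bigl(\E|\widehat\pi_t^\ast(\psi)-I_\psi^\ast(\widehat\theta_t)|^2\bigr)^{1/2}=\mathcal{O}(N_t^{-1/2})$.

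Next I would assemble these into the two halves of the decomposition $|\widehat\Pi_T^{\mathrm{MAMIS}}(\psi)-I|\le|\widehat\Pi_T^{\mathrm{MAMIS}}(\psi)-\widehat\Pi_T^\ast(\psi)|+|\widehat\Pi_T^\ast(\psi)-I|$. For the auxiliary term, following \cref{cor:Auxiliary-discrepancy}: by the triangle inequality in $L^2$,
\begin{align}
\bigl(\E|\widehat\Pi_T^\ast(\psi)-I|^2\bigr)^{1/2}
&\le \sum_{t=1}^T \frac{N_t}{\Omega_T}\Bigl[\bigl(\E|\widehat\pi_t^\ast(\psi)-I_\psi^\ast(\widehat\theta_t)|^2\bigr)^{1/2}+\bigl(\E|I_\psi^\ast(\widehat\theta_t)-I_\psi^\ast(\theta^\ast)|^2\bigr)^{1/2}\Bigr]\notag\\
&\lesssim \frac{1}{\Omega_T}\sum_{t=1}^T N_t^{1/2}\lesssim \Bigl(\frac{\Omega_T}{T}\Bigr)^{-1/2}=\bar N_T^{-1/2},\notag
\end{align}
where the last step is Jensen's inequality applied to the concave map $x\mapsto x^{1/2}$. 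For the discrepancy term, I would reproduce the chain \cref{term:TwoEstimatorsDiscrepancy}--\cref{term:byHolder}: bound $|\xi_T(X_i^t)-q(X_i^t,\theta^\ast)|\le L\sum_{l}\tfrac{N_l}{\Omega_T}\|\widehat\theta_l-\theta^\ast\|$ via \cref{assumption:lipschitz}, apply Hölder with the conjugate pair $(u,v)=(1+\tfrac\eta2,\tfrac{2+\eta}\eta)$, use the total-expectation bound on $\E|g(X_i^t)/\xi_T(X_i^t)|^{2u}\le\E|g(X_i^t)/m(X_i^t)|^{2+\eta}$ — which is uniformly bounded in $t$ because $\E\|\widehat\theta_t-\theta^\ast\|$ is bounded and \cref{assumption:moment} holds — and finally insert the $L^{2v}$ parameter bound $(\E\|\widehat\theta_l-\theta^\ast\|^{2v})^{1/(2v)}=\mathcal{O}(N_l^{-1/2})$. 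This yields $\bigl(\E|\widehat\Pi_T^{\mathrm{MAMIS}}(\psi)-\widehat\Pi_T^\ast(\psi)|^2\bigr)^{1/2}\lesssim \tfrac1{\Omega_T}\sum_l N_l^{1/2}\lesssim\bar N_T^{-1/2}$ again by Jensen. Adding the two halves gives $\mathcal{O}(\bar N_T^{-1/2})$.

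I expect the main subtlety — as opposed to the RQMC case where the exponential $\exp\{qM(\cdot)^{k/2}\}$ factor forced the $\epsilon$-loss — to be bookkeeping the moment exponents: one must check that the hypothesis $\E[|\tfrac{\pi h^{(j)}}{q(\cdot,\theta^\ast)}\circ T_\theta(X)|^{(4+2\eta)/\eta}]\le A$ supplies exactly the $q=2v=\tfrac{4+2\eta}\eta$ finite moment that \cref{lemma:Lq-convergence-MC} needs for the $(\E\|\widehat\theta_l-\theta^\ast\|^{2v})^{1/(2v)}$ estimate used inside the Hölder step \cref{term:byHolder}, and that the second moment hypothesis on $\tfrac{\pi\psi}{q(\cdot,\theta^\ast)}\circ T_\theta$ suffices for \cref{thm:pi-convergence}'s analogue. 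Everything else is a routine transcription of \cref{sec:main} with the clean rate $N_t^{-1/2}$ in place of $N_t^{-1+\epsilon}$, so no genuinely new difficulty arises; the MC rate is in fact cleaner because no logarithmic or sub-exponential factors appear, and Jensen's inequality on $x\mapsto x^{1/2}$ converts $\tfrac1{\Omega_T}\sum_t N_t^{1/2}$ directly into $\bar N_T^{-1/2}$ without any arbitrarily small $\epsilon$.
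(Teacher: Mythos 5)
Your proposal is correct and is exactly the route the paper intends: the paper only sketches this proof by saying to combine \cref{lemma:Lq-convergence-MC} with the analysis framework of \cref{sec:main}, and your transcription—replacing \cref{thm:Lq-convergence} by the Marcinkiewicz--Zygmund based lemma, with the moment hypotheses supplying precisely the $q=2$ and $q=2v=\frac{4+2\eta}{\eta}$ moments needed in \cref{thm:pi-convergence}'s analogue and in the H\"older step \cref{term:byHolder}, then finishing with Jensen on $x\mapsto x^{1/2}$—is the intended argument with the bookkeeping done correctly.
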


\section{Experimental results}
\label{sec:experiments}
In our numerical experiments, we use the scrambled Sobol' sequence in the RQMC methods and choose the proposals from the normal distribution family or Student's $t$ distribution family. Firstly, we will demonstrate the validity of  \cref{algorithm:MAMIS-unnormalized} with RQMC sampling through a mixture of Gaussians example. Subsequently, we propose \cref{algorithm:MAMIS-normalized}, which includes self-normalized weights to address the issue of weights potentially being rounded to zero due to computational precision limitations. This algorithm can also be applied when the normalizing constant of the target density is unknown. Additionally, we illustrate the effectiveness of \cref{algorithm:MAMIS-normalized} with RQMC sampling through examples involving mixture of Gaussians, banana-shaped model and Bayesian Logistic regression.
\subsection{Mixture of Gaussians}\label{subsec:MixtureofGaussians}
A mixture of Gaussians, also known as a Gaussian Mixture Model (GMM), is a probabilistic model that assumes all the data points are generated from a mixture of a finite number of Gaussian distributions. Formally, we denote $\mathcal{N}(\boldsymbol{x} \mid \mu, \Sigma)$ as the density of Gaussian distribution with mean vector $\mu$ and covariance matrix $\Sigma$. We consider the target density as
\begin{equation}\label{term:MixGaussians}
\pi(\boldsymbol{x}) = \frac{1}{K}\sum_{i=1}^{K} \mathcal{N}(\boldsymbol{x} \mid \mu_i, \Sigma_i).
\end{equation}
\subsubsection{The case that all $\Sigma_i$ are the same}\label{subsubsec:Toymodel}
We consider a 20-dimensional problem firstly. Denote $\boldsymbol{\underline{1}} = (1,1,\ldots,1)^{\top} $ as the vector in $\mathbb{R}^d$ with all components set to $1$ and let $\mu_1 = \boldsymbol{\underline{1}}, \mu_2 = \boldsymbol{0}, \mu_3 = -\boldsymbol{\underline{1}}$, where $\boldsymbol{0}$ is the zero vector in $\mathbb{R}^d$. We consider the case that the Gaussian distributions used for the mixture all share the same covariance matrix, that is
\begin{equation}
\pi(\boldsymbol{x}) =  \frac{1}{3} \sum_{i=1}^{3} \mathcal{N}(\boldsymbol{x} \mid \mu_i, \Sigma), \label{term:ToymodelTarget}  
\end{equation}
where $\Sigma $ denotes a $ d \times d $ matrix with all diagonal elements equal to $d$ and all remaining elements equal to 1. Now we use the MAMIS estimator to estimate the integral
\begin{equation}
\int_{\mathbb{R}^d} \psi(x)\pi(x) \mathrm{d}\boldsymbol{x} \notag
\end{equation}
for $d=20$ and $\psi(\boldsymbol{x}) = x_1^2$, where $\boldsymbol{x} = (x_1,\ldots,x_d)^{\top}$. This integral can be calculated exactly as 
\begin{align}
\frac{1}{3}\sum_{i=1}^{3}\int_{\mathbb{R}^d} x_1^2 \mathcal{N}(\boldsymbol{x} \mid \mu_i, \Sigma) \mathrm{d}\boldsymbol{x}
= d + \frac{2}{3}. \notag
\end{align}
We choose the proposals from normal distribution family $\mathcal{N}(\theta, \Sigma)$, that is, setting $q(\boldsymbol{x},\theta) = \mathcal{N}(\boldsymbol{x} \mid \theta, \Sigma)$ where $\Sigma$ is the same as in \cref{term:ToymodelTarget}. We propose that $\theta^{\ast} = \mathbb{E}_{\pi}[X]$, hence the $h$ in \cref{term:ThetastarForm} is $h(\boldsymbol{x}) = \boldsymbol{x}$. We set initial value of $\theta$ to be $(0.1,0.1,\ldots,0.1)^{\top}$ and $T = 64$ with the same number of points being used at each step, thus the number of points used at each step is equal to $\bar{N}_T$. We compute the RMSEs based on $J = 50$ independent repetitions and increase $\bar{N}_T$ from $2^{10}$ to $2^{16}$. 

We compare the MAMIS with the optimal drift IS (ODIS) method (see \cite{He2023}). ODIS is a common IS method that chooses the proposal from the normal distribution family, where the mean of proposal density is chosen to match the mode of the integrand. Formally, we use $\mathcal{N}(\boldsymbol{x}|\mu^{\ast},\Sigma)$ as the density of proposal, where $\Sigma$ is the same as in \cref{term:MixGaussians} and $\mu^{\ast} = \arg \max \limits_{\boldsymbol{x} \in \mathbb{R}^d} \psi(\boldsymbol{x})\pi(\boldsymbol{x})$ for the integrand $\psi$. To ensure fairness, we use the same total sample size $\Omega_T$ in ODIS as in MAMIS.

From \cref{fig:Toy model}, we can see the MC-based MAMIS converges at a rate $\mathcal{O}(\bar{N}_T^{-1/2})$ while the RQMC based MAMIS reaches $\mathcal{O}(\bar{N}_T^{-1+\epsilon})$. Moreover, the RQMC-based MAMIS outperforms all other methods. 
\begin{figure}[htbp]
    \centering
    \includegraphics[width=0.8\linewidth]{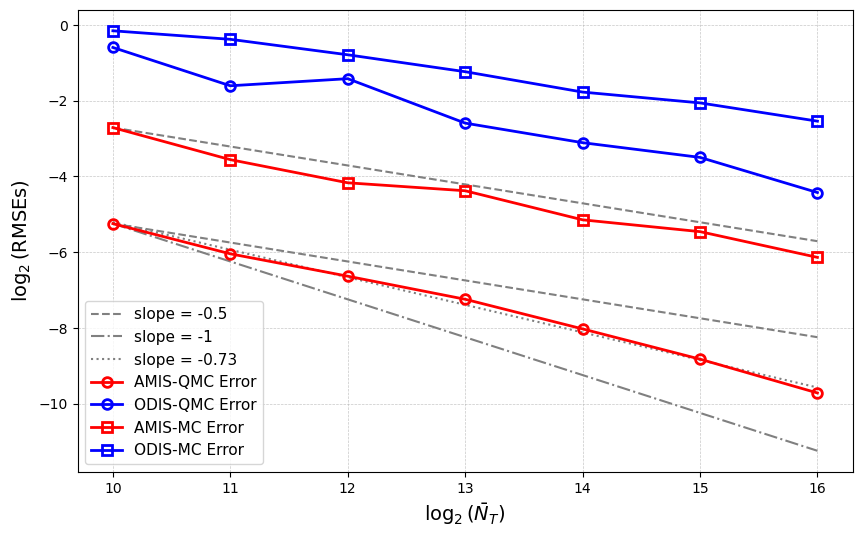}
    \caption{Log-RMSEs for the mixture of three Gaussians example where all $\Sigma_i$ are the same and the dimension of this example is $20$. The RMSEs are computed based on 50 repetitions.}
    \label{fig:Toy model}
\end{figure}

Although this example demonstrates the effectiveness of \cref{algorithm:MAMIS-unnormalized}, in more complex cases, it may occur that all calculated weights are very small (even though some are relatively larger). Due to the limitations of machine precision, all weights may be truncated to zero, leading to an invalid estimate. And in some problems, such as Bayesian computing, the normalizing constant of the target density is unknown. To overcome these challenges, we consider the inclusion of weight self-normalization in the MAMIS approach as the following \cref{algorithm:MAMIS-normalized}. The effectiveness of this algorithm will be shown in the subsequent experiments.
\begin{algorithm}[htbp]
\caption{Self-Normalized MAMIS}
\begin{algorithmic}\label{algorithm:MAMIS-normalized}
\STATE \textbf{Input:} an initial parameter $\widehat{\theta}_1$, and sample sizes $N_1, \ldots, N_T$.
\FOR{$t = 1 \to T$}
    \FOR{$i = 1 \to N_t$}
        \STATE \textbf{draw} $X_i^t$ from $Q(\widehat{\theta}_t)$
        \STATE \textbf{compute} $W_i^t = \frac{\pi(X_i^t)}{q(X_i^t, \widehat{\theta}_t)}$
        \STATE \textbf{compute} $\omega_i^t = \frac{W_i^t}{\sum_{i=1}^{N_t} W_i^t}$
    \ENDFOR
    \STATE \textbf{compute} $\widehat{\theta}_{t+1} = N_t^{-1} \sum_{i=1}^{N_t} \omega_i^t h(X_i^t)$
\ENDFOR
\STATE \textbf{set} $\Omega_T = N_1 + \cdots + N_T$
\FOR{$t = 1 \to T$}
    \FOR{$i = 1 \to N_t$}
        \STATE \textbf{update} $W_i^t = \frac{\pi(X_i^t)}{{\Omega_{T}^{-1} \sum_{l=1}^T N_l q(X_i^t, \widehat{\theta}_l)}}$
        \STATE \textbf{update} $\omega_i^t = \frac{W_i^t}{\sum_{i=1}^{N_t} W_i^t}$
    \ENDFOR
\ENDFOR
\RETURN $(X_1^1, \omega_1^1), \ldots, (X_{N_1}^1, \omega_{N_1}^1), \ldots, (X_1^T, \omega_1^T), \ldots (X_{N_T}^T, \omega_{N_T}^T).$
\end{algorithmic}
\end{algorithm}

\subsubsection{The case that $\Sigma_i$ are different}\label{subsubsec:MixGaussiansSigmaCanbeDifferent}
We consider a two-dimensional mixture of five normals example from \cite{Huang2022PQMC} with proper scaling. We set
\begin{equation}
\pi(\boldsymbol{x}) = \frac{1}{5} \sum_{i=1}^{5} \mathcal{N}(\boldsymbol{x} \mid \mu_i, \Sigma_i), \notag
\end{equation}
where
$\mu_1 = [1, 1]^{\top}$,
$\mu_2 = [2, 3.6]^{\top}$,
$\mu_3 = [3.3, 2.8]^{\top}$,
$\mu_4 = [1.1, 2.9]^{\top}$,
$\mu_5 = [3.4, 0.6]^{\top}$,
and
$\Sigma_1 = \frac{1}{40^2} [2, 0.6; 0.6, 1]^{\top}$,
$\Sigma_2 = \frac{1}{40^2} [2, -0.4; -0.4, 2]^{\top}$,
$\Sigma_3 = \frac{1}{40^2} [2, 0.8; 0.8, 2]^{\top}$,
$\Sigma_4 = \frac{1}{40^2} [3, 0; 0, 0.5]^{\top}$,
$\Sigma_5 = \frac{1}{40^2} [2, -0.1; -0.1, 2]^{\top}$.

In \cref{subsubsec:Toymodel} , since the Gaussian distributions used for mixing have the same covariance matrix $\Sigma$, our proposals are chosen from a normal distribution family with a fixed covariance matrix $\Sigma$. However, in more complex cases, we will take into account the impact of covariance matrices. Thus, we consider parameters 
\begin{equation}
\theta = (\theta_1,\ldots,\theta_d,\theta_{d+1},\ldots,\theta_{d+d^2})^{\top} \notag
\end{equation}
 in $\Theta \subseteq \mathbb{R}^{d+d^2}$ and denote $\mathcal{N}(\theta) := \mathcal{N}(\mu_{\theta}, \Sigma_{\theta})$, where $\mu_{\theta} = (\theta_1, \ldots, \theta_d)^{\top}$ and
\begin{equation}
\Sigma_{\theta} = \begin{pmatrix}
\theta_{d+1} & \theta_{d+2} & \ldots & \theta_{2d} \\
\theta_{2d+1} & \theta_{2d+2} & \ldots & \theta_{3d} \\
\vdots & \vdots & \ddots & \vdots \\
\theta_{d^2+1} & \theta_{d^2+2} & \ldots & \theta_{d^2+d}
\end{pmatrix}. \notag    
\end{equation}

We choose proposals from $\{\mathcal{N}(\theta)\}$ and propose 
\begin{equation}
\theta^{\ast} = \begin{pmatrix}
\mathbb{E}_{\pi}[X]\\
\text{vec}(\text{Cov}_{\pi}(X))  
\end{pmatrix},\notag
\end{equation}
where $\text{vec}(A)$ means the vectorization of matrix $A = (a_{ij})_{d\times d}$, that is, transforming $A$ into a vector
\begin{equation}
\text{vec}(A) = (a_{11},a_{12},\ldots,a_{1d},a_{21},\ldots,a_{2d},\ldots,a_{d1},\ldots,a_{dd})^{\top}. \notag
\end{equation}
Hence the $h$ in \cref{term:ThetastarForm} can be written as
\begin{equation}
h(\boldsymbol{x}) = \begin{pmatrix}
\boldsymbol{x}\\
\text{vec}((\boldsymbol{x} - \mathbb{E}_{\pi}[X])(\boldsymbol{x} - \mathbb{E}_{\pi}[X])^{\top})  
\end{pmatrix}.\label{term:FunctionhOfMixGau}
\end{equation}

Since $\mathbb{E}_{\pi}[X]$ may be unknown, we initially use a small number of samples to run the algorithm and provide an approximate value for $\mathbb{E}_{\pi}[X]$ under 
\begin{equation}
h(\boldsymbol{x}) = \begin{pmatrix}
\boldsymbol{x}\\
\text{vec}(\boldsymbol{x}\boldsymbol{x}^{\top})  
\end{pmatrix}.\notag
\end{equation}
We will then use this value as $\mathbb{E}_{\pi}[X]$ in \cref{term:FunctionhOfMixGau} to run the algorithm with more samples to compute the integral $\int_{\mathcal{X}} \psi(\boldsymbol{x})\pi(\boldsymbol{x}) \mathrm{d}\boldsymbol{x}$, while this value can also serve as a good initial value of the first $d$ components of $\theta$. Here we use $\psi(\boldsymbol{x}) = \boldsymbol{x}$ (see \cref{remark:PsiCanBeVector}) to 
test the algorithm. Firstly, we set initial value of $\theta$
\begin{equation}
\widehat{\theta}_1 = \begin{pmatrix}
\boldsymbol{0}\\
\text{vec}(I_d)  
\end{pmatrix} \notag
\end{equation} and $T=32,J=10$ and use $2^4$ points at each step to obtain an approximate value for $\mathbb{E}_{\pi}[X]$, which is denoted as $\widehat{\boldsymbol{\mu}}$. Then we set 
\begin{equation}
\widehat{\theta}_1 = \begin{pmatrix}
\widehat{\boldsymbol{\mu}}\\
\text{vec}(I_d)  
\end{pmatrix} \notag
\end{equation} and $T=64,J=50$ and use the same number $\bar{N}_T$ of points at each step to estimate the target mean using \cref{algorithm:MAMIS-normalized}, where $\bar{N}_T$ increases from $2^{11}$ to $2^{17}$. The exact value of $\int_{\mathcal{X}} \psi(\boldsymbol{x})\pi(\boldsymbol{x}) \mathrm{d}\boldsymbol{x}$ is $(2.16,2.14)^T$ and we compute the RMSEs as in \cref{fig:MixtureGaussians}. We observe that the convergence rate of RQMC-based MAMIS can reach $\mathcal{O}(\bar{N}_T^{-3/2+\epsilon})$, which may be attributed to the excellent properties of scrambled nets; see \cite{Owen2008Scrambled}.
\begin{figure}[htbp]
    \centering
    \includegraphics[width=0.8\linewidth]{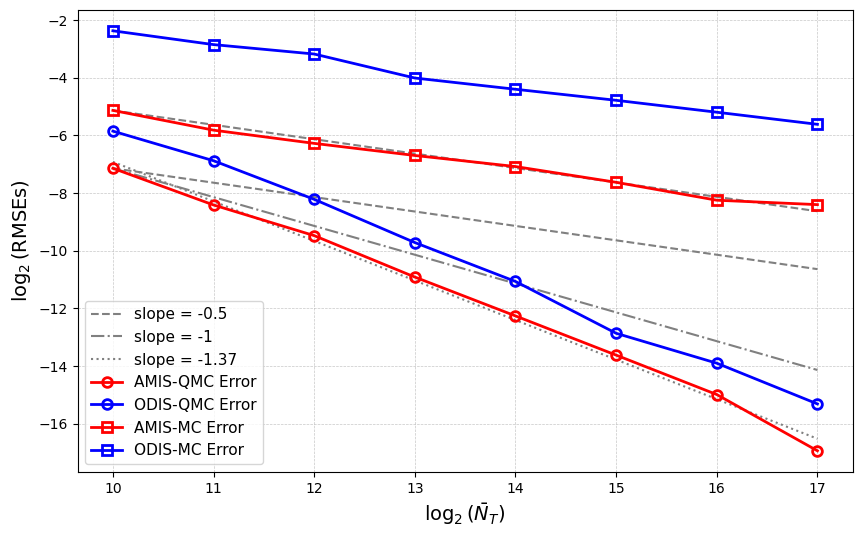}
    \caption{Log-RMSEs for the two-dimensional mixture of five Gaussians example where $\Sigma_i$ are different. The RMSEs are computed based on 50 repetitions.}
    \label{fig:MixtureGaussians}
\end{figure}

\subsection{Bayesian Logistic regression}
Bayesian Logistic regression is a probabilistic approach to modeling binary outcomes using Logistic regression within a Bayesian framework. In this model, predictors $X = ({X}_1, \ldots, {X}_n)^{\top}$ are related to the binary response ${Y} = (Y_1, \ldots, Y_n)^{\top}$ through a vector of unknown parameters $\boldsymbol{z} \in \mathbb{R}^d$. The likelihood function for the data is then expressed as

\begin{equation}
\ell({\boldsymbol{z}}) = \prod_{i=1}^n \left( \frac{\exp({X}_i^{\top} {\boldsymbol{z}})}{1 + \exp({X}_i^{\top} {\boldsymbol{z}})} \right)^{Y_i} \left( \frac{1}{1 + \exp({X}_i^{\top} {\boldsymbol{z}})} \right)^{1-Y_i}.\notag
\end{equation}

Under the Bayesian framework, the parameter ${\boldsymbol{z}}$ is modeled as a random vector with a prior distribution assumed to be a standard normal distribution $\mathcal{N}({0}, {I}_d)$. The posterior distribution of ${z}$ given ${Y}$ is then given by $ \pi( {z}) \propto \mathcal{N}(\boldsymbol{z}|\boldsymbol{0},I_d) \ell( {\boldsymbol{z}})$.

Owen \cite{owenqmc} suggested using the multivariate Student's $t$ distribution $t_\nu(\mu,\Sigma)$ as proposal in Bayesian Logistic regression and He et al. \cite{He2023} studied choosing proposal from $t_\nu(\mu,\Sigma)$ by ODIS and Laplace IS (LapIS). The LapIS uses $\mathcal{N}(\boldsymbol{x}|\mu^{\ast},\Sigma^{\ast})$ as the density of proposal, where $\mu^{\ast}$ is the same as in ODIS, and $\Sigma^{\ast} = -\nabla^2H(\mu^{\ast})$ with $H(\boldsymbol{x}) = \log(\psi(\boldsymbol{x})\pi(\boldsymbol{x}))$. He et al. \cite{He2023} used $t_\nu(\mu^{\ast},\Sigma^{\ast})$ as the proposal with the same $\mu^{\ast},\Sigma^{\ast}$ as in LapIS.

Hence we employ a multivariate Student's $t$ distribution $t_\nu(\mu,\Sigma)$ as the proposal family, where $\nu$ is the degree of freedom. We set $\nu=2$ and use the same method as in \cref{subsubsec:Toymodel} to update $\mu$ and $\Sigma$. We use the first 30 entries of pima dataset \cite{Ripley1996} to compute likelihood function and estimate $\int_{\mathbb{R}^d} \lVert\boldsymbol{z}\rVert^2 \mathrm{d}\boldsymbol{z}$ where $d=9$ and $\lVert\boldsymbol{z}\rVert^2 = \sum \limits_{i=1}^d z_i^2$. We use the same method to choose the initial value of mean  as in \cref{subsubsec:MixGaussiansSigmaCanbeDifferent}, and record the covariance matrices updated at the end of each experiment in this process, taking their average as the initial value of the covariance matrix. We set $T=64,J=50$ and use the same number $\bar{N}_T = 2^{17}$ of RQMC points at each step to compute a value as the true value. Then we increase $\bar{N}_T$ from $2^{9}$ to $2^{15}$ to calculate the RMSEs as in \cref{fig:BLR}. 
We compare MAMIS with  ODIS ($\mu = \mu^{\ast}, \Sigma=I_d$) and  LapIS. We can observe that the RQMC-based MAMIS estimator converges at $\mathcal{O}((\bar{N}_T)^{-0.64})$ better than the MC-based MAMIS estimator. Using $t$ distribution as the proposal family, the RQMC-based MAMIS and the RQMC-based LapIS performs similarly while the latter performs best in \cite{He2023}. 

\begin{figure}[htbp]
    \centering
    \includegraphics[width=0.8\linewidth]{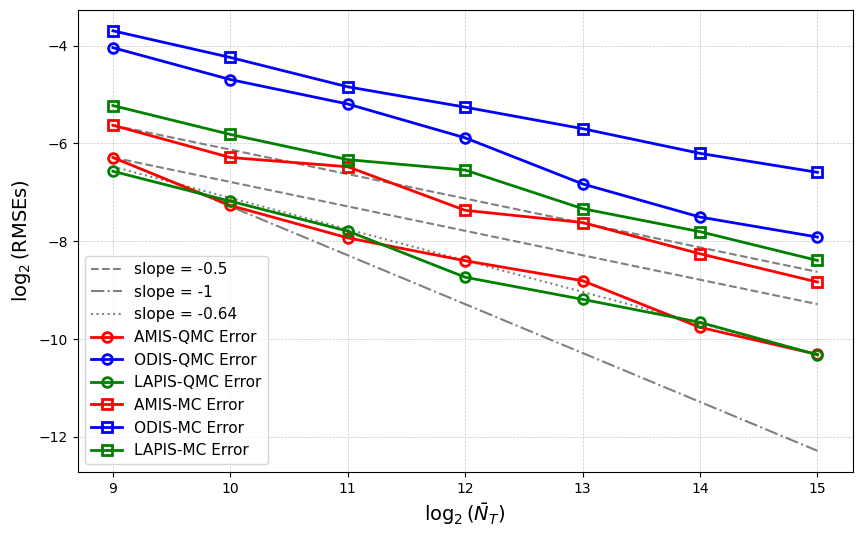}
    \caption{Log-RMSEs for the Bayesian Logistic regression example. The RMSEs are computed based on 50 repetitions.}
    \label{fig:BLR}
\end{figure}

\subsection{Banana-shaped target example}\label{subsec:Banana Model}
The banana-shaped target “can be calibrated to become extremely challenging” \cite{Cornuet2012}. We consider a two-dimensional banana-shaped target example \cite{Martino2017LAIS}. The target density is
\begin{equation}
\pi(\boldsymbol{x}) \propto \exp \left\{ -\frac{1}{2\eta_1^2} \left( 4 - bx_1 - x_2^2 \right)^2  - \frac{x_2^2}{2\eta_2^2} \right\}.
\end{equation} 
We set $\eta_1=3,\eta_2=2$ and $b=10$, then the target mean $\int_{\mathcal{X}} \boldsymbol{x} \mathrm{d}\boldsymbol{x}$ can be known as zero.
We set $T = 64$ and other settings are the same as in \cref{subsubsec:MixGaussiansSigmaCanbeDifferent} to compute the RMSEs between the target mean and its MAMIS estimator as follows. 
We compare the MAMIS with ODIS and  LapIS. We can observe that the RQMC-based MAMIS estimator converges at $\mathcal{O}((\bar{N}_T)^{-0.84})$, which is much better than the MC-based MAMIS. In contrast, the RQMC-based ODIS estimator and the RQMC-based LapIS estimator have much larger errors compared to the RQMC-based MAMIS estimator.

\begin{figure}[htbp]
    \centering
    \includegraphics[width=0.8\linewidth]{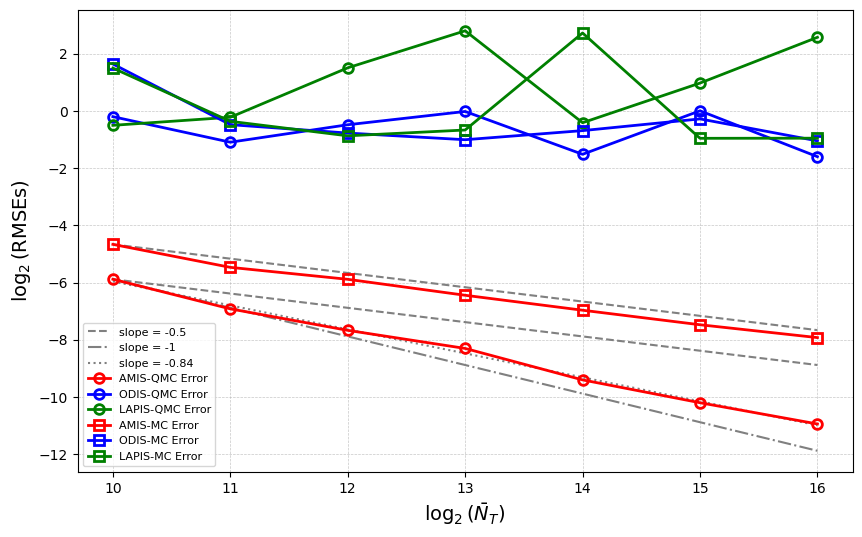}
    \caption{Log-RMSEs for the two-dimensional Banana-shaped target example. The RMSEs are computed based on 50 repetitions.}
    \label{fig:Banana model}
\end{figure}

\section{Conclusions}
\label{sec:conclusions}
In this study, we have explored the convergence properties of the MAMIS algorithm under the RQMC setting. By integrating RQMC methods into the MAMIS framework, we have demonstrated significant increase in the convergence rates compared to the traditional MC-based approaches. Specifically, we have shown that the RMSE of the RQMC-based MAMIS estimator can achieve an error rate of $\mathcal{O}(\bar{N}_T^{-1+\epsilon})$ for an arbitrarily small $\epsilon > 0$, where $T$ denotes the number of iteration and $\bar{N}_T$ represents the average number of samples used in each step over $T$ iterations.

Our theoretical analysis and numerical experiments highlight several key findings. Firstly, the incorporation of RQMC methods into the MAMIS significantly enhances the efficiency of the sampling process. This is attributed to the superior convergence properties of QMC methods compared to the traditional MC sampling. Secondly, we established higher-order $L^q$ $(q\geq2) $ error bounds for RQMC estimators. Existing results on error estimates for RQMC estimators often focus only on $L^1$ and $L^2$ errors. This limitation poses difficulties when we attempt to analyze errors using Hölder's inequality under the RQMC setting. Therefore, our results on higher-order error estimates provide a more powerful tool for conducting error analysis under the RQMC framework. Finally, within our theoretical framework, we provided the convergence rate results under the MC setting, thereby complementing the work of Marin et al. \cite{Marin2019}, which was limited to consistency results. The numerical results validate the effectiveness of our method in achieving faster convergence rates and improved estimation accuracy.

In our theoretical analysis, we retained the assumption \cref{term:ThetastarForm} from Marin et al. \cite{Marin2019}.  Although we used a moment-matching criterion in our experiments, which satisfies this assumption, there are some other criterion under which this assumption may not hold. However, it is important to note that this assumption is only used to design the algorithm for optimizing the parameters. Within our error analysis framework, we only need to know the parameter error, and the assumption itself is not required. Therefore, if we introduce other stochastic optimization algorithms that do not require this assumption to optimize the parameters, we might be able to weaken this assumption. In our experiments, we proposed a self-normalized MAMIS algorithm but did not conduct an error analysis for it. We leave these issues for future work.

\appendix
\section{Proof of \cref{thm:Lq-convergence}} \label{appendix:Proof of Lq-convergence}

Before the proof of \cref{thm:Lq-convergence}, we first conduct some analysis and prove two lemmas. 

By Jensen's inequality, we have 
\begin{equation}
\left( \frac{a + b + c}{3}\right)^q \leq \frac{a^q + b^q +c^q}{3}(a,b,c > 0),
\notag
\end{equation}
implying that 
\begin{align}
3^{1-q}\left|\widehat{I}_{n}(f)-\E[f(X)]\right|^{q} \leq & \big|\widehat{I}_{n}(f)-\widehat{I}_{n}(f\circ P_R)\big|^{q} + \big|\widehat{I}_{n}(f\circ P_R)-\E\left[f \circ P_{R}(X)\right]\big|^{q} \notag\\ 
& + {\big|\E\left[f \circ P_{R}(X)\right]-\E[f(X)]\big|}^{q}.\notag
\end{align}
Taking the expectation on both sides of this inequality, we obtain 
\begin{align}
3^{1-q}\E\left|\widehat{I}_{n}(f)-\E[f(X)]\right|^{q} \leq & \E\big|\widehat{I}_{n}(f)-\widehat{I}_{n}(f\circ P_R)\big|^{q} + \E\big|\widehat{I}_{n}(f\circ P_R)-\E\left[f \circ P_{R}(X)\right]\big|^{q} \notag\\ 
& + {\big|\E\left[f \circ P_{R}(X)\right]-\E[f(X)]\big|}^{q}.\notag\\
=: & E_1 + E_2 +E_3 \label{term:ErrorDecompositionInAppendix}
\end{align}

$E_1$ and $E_3$ are related to the following lemma.
\begin{lemma}\label{lemma:ProjectionError}
For any $f \in G_e(M,B,k)$ with $0<k<2$, we have
\begin{align}
\E\left|f(X)-f \circ P_{R}(X)\right|^{q} \leq C_0(R-1)^{d-2} \exp \left\{-\frac{1}{4}(R-1)^{2}\right\}, \notag
\end{align}
where $C_0= 2^q(\frac{\pi}{2})^{\frac{d}{2}-1} B^{q} \exp \left\{\frac{2-k}{2} qM(2 q k M)^{\frac{k}{2-k}}\right\}(d+1)!!$ and the projection radius $R>\sqrt{2}+1$.
\end{lemma}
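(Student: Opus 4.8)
\emph{Proof proposal.} The plan is to use that $P_R$ acts as the identity on a large cube and as a mild contraction everywhere, so that $f(X)-f\circ P_R(X)$ vanishes off a rare event and, on that event, is controlled by the crudest instance of the growth condition. First I would record two elementary facts about $P_R$. Differentiating the piecewise definition shows $P_R\in C^1(\mathbb{R})$ with $0\le P_R'\le 1$ everywhere ($P_R'=1$ on $[-R+1,R-1]$, $P_R'=0$ off $(-R,R)$, and $P_R'(x)=R-|x|\in(0,1)$ on the two transition intervals), and $P_R(0)=0$; hence $|P_R(x)|\le|x|$ for every real $x$, so $|P_R(\boldsymbol{x})|\le|\boldsymbol{x}|$ in the Euclidean norm. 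Moreover $P_R(x)=x$ for $x\in[-R+1,R-1]$, so $f\circ P_R(\boldsymbol{x})=f(\boldsymbol{x})$ whenever $|\boldsymbol{x}|\le R-1$ (indeed whenever $|\boldsymbol{x}|_\infty\le R-1$). Combining this with the bound $|f(\boldsymbol{x})|\le Be^{M|\boldsymbol{x}|^{k}}$ obtained from \cref{def:GrowthCondition} with $u=\emptyset$, applied to both $f$ and $f\circ P_R$, gives the pointwise estimate $|f(\boldsymbol{x})-f\circ P_R(\boldsymbol{x})|\le 2Be^{M|\boldsymbol{x}|^{k}}\mathbf{1}\{|\boldsymbol{x}|>R-1\}$, so that
\[
\mathbb{E}\left|f(X)-f\circ P_R(X)\right|^{q}\le (2B)^{q}\,\mathbb{E}\!\left[e^{qM|X|^{k}}\mathbf{1}\{|X|>R-1\}\right].
\]

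Next I would pass to the radial variable. Since $X\sim\mathcal{N}(\boldsymbol{0},I_d)$, integrating in spherical coordinates yields
\[
\mathbb{E}\!\left[e^{qM|X|^{k}}\mathbf{1}\{|X|>R-1\}\right]=\frac{1}{2^{d/2-1}\Gamma(d/2)}\int_{R-1}^{\infty}\rho^{d-1}e^{qM\rho^{k}-\rho^{2}/2}\,\mathrm{d}\rho .
\]
I then split $qM\rho^{k}-\rho^{2}/2=(qM\rho^{k}-\rho^{2}/4)-\rho^{2}/4$ and bound the first bracket by its supremum over $\rho\ge 0$; because $0<k<2$ this supremum is finite and a short calculus exercise gives $\sup_{\rho\ge0}(qM\rho^{k}-\rho^{2}/4)=\frac{2-k}{2}qM(2qkM)^{k/(2-k)}$, attained at $\rho=(2qMk)^{1/(2-k)}$. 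This reproduces exactly the exponential factor in $C_0$ and leaves the Gaussian tail integral $\int_{R-1}^{\infty}\rho^{d-1}e^{-\rho^{2}/4}\,\mathrm{d}\rho$ to be estimated.

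For that tail integral, with $a:=R-1>\sqrt2$ I claim $\int_a^\infty \rho^{d-1}e^{-\rho^{2}/4}\,\mathrm{d}\rho\le C_d\, a^{d-2}e^{-a^{2}/4}$ where $C_1=C_2=2$ and $C_d=2+(d-2)C_{d-2}$. The base cases are direct: $\int_a^\infty e^{-\rho^2/4}\,\mathrm{d}\rho\le\frac{2}{a}e^{-a^2/4}$ (since $1\le\rho/a$ on $[a,\infty)$) and $\int_a^\infty\rho e^{-\rho^2/4}\,\mathrm{d}\rho=2e^{-a^2/4}$; and one integration by parts gives $\int_a^\infty\rho^{d-1}e^{-\rho^2/4}\,\mathrm{d}\rho=2a^{d-2}e^{-a^2/4}+2(d-2)\int_a^\infty\rho^{d-3}e^{-\rho^2/4}\,\mathrm{d}\rho$, where the hypothesis $a^2>2$ is exactly what lets the $a^{-2}$ produced by the induction hypothesis be absorbed. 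A direct induction then shows $C_d\le\pi^{d/2-1}\Gamma(d/2)(d+1)!!$ (with ample slack for $d\ge 3$; the cases $d=1,2$ are tight).

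Substituting back and collecting $\frac{(2B)^{q}}{2^{d/2-1}\Gamma(d/2)}\cdot C_d\le 2^{q}(\tfrac{\pi}{2})^{d/2-1}B^{q}(d+1)!!$ gives precisely $C_0$, completing the proof. The only genuinely delicate point is this last bookkeeping step: matching the advertised constant forces one to use the sharp small-$d$ tail estimates rather than a blanket binomial bound, and the restriction $R>\sqrt2+1$ enters solely to make the integration-by-parts recursion in the tail estimate telescoping. Everything else — the contraction property of $P_R$, the radial reduction, and the one-dimensional optimization of $qM\rho^{k}-\rho^{2}/4$ — is routine.
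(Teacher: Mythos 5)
Your proof is correct and follows essentially the same route as the paper's: restrict to the event $\{|\boldsymbol{x}|>R-1\}$, where $P_R$ ceases to be the identity, bound $|f|+|f\circ P_R|$ by $2Be^{M|\boldsymbol{x}|^{k}}$ via the growth condition (using $|P_R(\boldsymbol{x})|\le|\boldsymbol{x}|$), absorb $e^{qM\rho^{k}}$ into half of the Gaussian weight — your supremum of $qM\rho^{k}-\rho^{2}/4$ is exactly the constant the paper obtains from Young's inequality — and then estimate the tail integral $\int_{R-1}^{\infty}\rho^{d-1}e^{-\rho^{2}/4}\,\mathrm{d}\rho$. The only differences are cosmetic: you keep the exact chi-density normalization and prove the tail bound yourself by an integration-by-parts recursion (verifying the resulting constant is dominated by $C_0$), whereas the paper bounds the angular integral by $(\pi/2)^{\frac{d}{2}-1}$ and cites Lemma A.2 of \cite{Ouyang2024} for the $(d+1)!!$ tail estimate, with the hypothesis $R>\sqrt{2}+1$ playing the same role in both arguments.
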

\begin{proof}
Note that
\begin{equation}
\E\left|f(X)-f \circ P_{R}(X)\right|^{q}
\leq (2 \pi)^{-\frac{d}{2}} \int_{|\boldsymbol{x}| \geqslant R-1}\left(|f(\boldsymbol{x})|+\left|f \circ P_{R}(\boldsymbol{x})\right|\right)^{q} e^{-|\boldsymbol{x}|^{2} / 2} \mathrm{d}\boldsymbol{x}.
\label{term:projerror}
\end{equation}
Then for any $f \in G_{e}(M, B, K)$, the right hand side of \cref{term:projerror}
\begin{align}
& \leq(2 \pi)^{-\frac{d}{2}} \int_{|\boldsymbol{x}| \geqslant R-1}\left(2 B \exp \left\{M|\boldsymbol{x}|^{k}\right\}\right)^{q} e^{-|\boldsymbol{x}|^{2} / 2} d \boldsymbol{x} \notag\\
& =\frac{2^q B^q}{(2 \pi)^{\frac{d}{2}}} \int_{0}^{2 \pi} \ldots \int_{0}^{\pi} \prod_{j=1}^{d-2}\left|\sin \psi_{j}\right|^{d-1-j} \int_{R-1}^{\infty} r^{d-1}\exp \left\{q M r^{k}\right\} e^{-r^{2} / 2} \mathrm{d}r \mathrm{d}\psi_{1:(d-1)}, \notag
\end{align}
where we use the polar coordinates transformation in the last equality.
From the fact that $|\sin\psi| \leq 1$, we obtain the right hand side of \cref{term:projerror} is bounded by
\begin{equation}
2^q B^q(\frac{\pi}{2})^{\frac{d}{2}-1} \int_{R-1}^{\infty} r^{d-1} \exp \left\{q M r^{k}\right\} e^{-r^{2} / 2} \mathrm{d}r.
\label{Geterm}
\end{equation}
By Young's inequality, for any $ \epsilon>0$, 
\begin{equation}
r^{k} \leq \frac{\left(\epsilon r^{k}\right)^{\frac{2}{k}}}{\frac{2}{k}}+\frac{\left(\frac{1}{\epsilon}\right)^{\frac{2}{2-k}}}{\frac{2}{2-k}}=\frac{k}{2} \epsilon^{\frac{2}{k}} r^{2}+\frac{2-k}{2} \left(\frac{1}{\epsilon}\right)^{\frac{2}{2-k}}, \notag
\end{equation}
let $ \epsilon^{\frac{2}{k}}=\frac{1}{2 q kM}$, we get
\begin{equation}
 q M r^{k} \leq \frac{1}{4} r^{2}+\frac{2-k}{2}  q M (2 q k M)^{\frac{k}{2-k}}, \notag
\end{equation}
then \cref{Geterm} can be bounded as follows.
\begin{align}
& 2^q B^q(\frac{\pi}{2})^{\frac{d}{2}-1} \int_{R-1}^{\infty} r^{d-1} \exp \left\{q M r^{k}\right\} e^{-r^{2} / 2} \mathrm{d}r \notag\\
\leq & 2^q(\frac{\pi}{2})^{\frac{d}{2}-1} \exp \left\{\frac{2-k}{2} qM (2 q k M)^{\frac{k}{2-k}}\right\} B^{q} \int_{R-1}^{\infty} r^{d-1} e^{-\frac{r^{2}}{4}} d r \notag\\
\leq & 2^q(\frac{\pi}{2})^{\frac{d}{2}-1} B^{q} \exp \left\{\frac{2-k}{2} qM(2 q k M)^{\frac{k}{2-k}}\right\}(d+1)!!(R-1)^{d-2} \exp \left\{-\frac{1}{4}(R-1)^{2}\right\}, \notag
\end{align}
where we use the lemma A.2 in \cite{Ouyang2024} for the second inequality.
\end{proof}

Note that, for $E_1$ in \cref{term:ErrorDecompositionInAppendix}, we have
\begin{align}
E_1
= &\E\left|\widehat{I}_{n}(f)-\widehat{I}_{n}(f\circ P_R)\right|^{q} \notag\\
= & \E\left[\left|\frac{1}{n} \sum_{j=1}^{n} f \circ \Phi^{-1}\left(y_{j}\right)-\frac{1}{n} \sum_{j=1}^{n} f \circ P_{R} \circ \Phi^{-1}\left(y_{j}\right)\right|^{q}\right],  \notag\\
\leq & \E\left[\frac{1}{n^{q}} \cdot n^{q-1} \sum_{j=1}^{n}\left|f \circ \Phi^{-1}\left(y_{j}\right)-f \circ P_{R} \circ \Phi^{-1}\left(y_{j}\right)\right|^{q}\right] \notag\\
= & \frac{1}{n} \sum_{j=1}^{n} \E\left|f(X)-f \circ P_{R}(X)\right|^{q} \label{term:identicallydistinProof}\\
= & \E\left|f(X)-f \circ P_{R}(X)\right|^{q},  \notag
\end{align}
where we use each $\Phi^{-1}(y_j) \sim N(\boldsymbol{0},I_d)$ in \cref{term:identicallydistinProof}. For $E_3$ in \cref{term:ErrorDecompositionInAppendix}, by Jensen's inequality, we have
\begin{equation}
\left|\E\left[f(X)-f \circ P_{R}(X)\right]\right|^{q} \leq \E\left|f(X)-f \circ P_{R}(X)\right|^{q}, \notag
\end{equation}
Therefore, using \cref{lemma:ProjectionError},  we obtain $E_1$ and $E_3$ both can be bounded by 
\begin{align}
& C_0(R-1)^{d-2} \exp \left\{-\frac{1}{4}(R-1)^{2}\right\}. \notag
\end{align}

For the remaining $E_2$,  we obtain the following result.
\begin{lemma}
For any $ f \in G_{e}\left(M, B, K\right)(0<k<2)$ and the RQMC point set $\left\{y_{1}, \ldots, y_{n}\right\}$, we have
\begin{align}
\E\left|\widehat{I}_{n}(f\circ P_R)-E [ f \circ P_{R}(X)]\right|^{q}
& \leq 2^{2qd} B^q R^{qd} \exp \left\{qM(\sqrt{d} R)^{k}\right\}  \frac{C(\log n)^{q(d-1)}}{n^q}. \notag
\end{align}
\end{lemma}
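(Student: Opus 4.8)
The plan is to recognize $f\circ P_R\circ\Phi^{-1}$ as a function of bounded Hardy--Krause variation on $[0,1)^d$, to bound that variation by a quantity independent of $n$, and then to feed it into the Koksma--Hlawka inequality together with the almost-sure discrepancy bound imposed on $\{y_1,\dots,y_n\}$. Concretely, Koksma--Hlawka gives
\begin{equation}
\abs{\widehat I_n(f\circ P_R)-\E[f\circ P_R(X)]}\le V_{\mathrm{HK}}\big(f\circ P_R\circ\Phi^{-1}\big)\,\frac{C(\log n)^{d-1}}{n}\quad a.s.,\notag
\end{equation}
so raising both sides to the power $q$ and taking expectations immediately reduces the lemma to showing $V_{\mathrm{HK}}(f\circ P_R\circ\Phi^{-1})\le 2^{2d}BR^{d}\exp\{M(\sqrt d R)^k\}$ (then one absorbs $C^q$ into $C$). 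Since $P_R$ is $C^1$ and $f\in\mathcal S^{d}(\R^{d})$, the composition has continuous mixed partial derivatives, so its variation is well defined.

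Second, I would compute the mixed partials explicitly. Because both $P_R$ and $\Phi^{-1}$ act coordinatewise, for every $\emptyset\ne u\subseteq 1:d$ the chain rule factorizes cleanly:
\begin{equation}
\partial^{u}\big(f\circ P_R\circ\Phi^{-1}\big)(\boldsymbol y)=\big(\partial^{u} f\big)\big(P_R(\Phi^{-1}(\boldsymbol y))\big)\prod_{j\in u}P_R'(\Phi^{-1}(y_j))\,(\Phi^{-1})'(y_j).\notag
\end{equation}
The Vitali variation of this on the face where the coordinates outside $u$ are fixed at the right endpoint of $[0,1)$ (at which $P_R\circ\Phi^{-1}$ still takes the finite value $R-\tfrac12$) is the integral of the absolute value of the above over $[0,1)^{u}$. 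Changing variables $x_j=\Phi^{-1}(y_j)$ for $j\in u$ makes the Jacobian $\prod_{j\in u}\phi(x_j)$ cancel the factors $\prod_{j\in u}(\Phi^{-1})'(y_j)$ exactly, leaving $\int_{\R^{u}}\abs{(\partial^{u} f)(P_R(\boldsymbol x))}\prod_{j\in u}\abs{P_R'(x_j)}\,\mathrm d\boldsymbol x_u$. Then, since $P_R'\equiv 0$ outside $[-R,R]$ and $P_R$ restricts to a $C^1$ increasing bijection of $(-R,R)$ onto $(-R+\tfrac12,R-\tfrac12)$, a further change of variables $z_j=P_R(x_j)$ gives $\int_{(-R,R)^{u}}\abs{(\partial^{u} f)(\boldsymbol z)}\,\mathrm d\boldsymbol z\le B\exp\{M(\sqrt d R)^k\}(2R)^{\abs u}$, where the last step uses the growth condition $\abs{\partial^{u} f(\boldsymbol x)}\le Be^{M\abs{\boldsymbol x}^k}$ together with the fact that every coordinate of the argument lies in $(-R,R)$, so its Euclidean norm is at most $\sqrt d R$.

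Third, summing the bound $B\exp\{M(\sqrt d R)^k\}(2R)^{\abs u}\le B\exp\{M(\sqrt d R)^k\}(2R)^{d}$ over the at most $2^d$ nonempty subsets $u$ yields $V_{\mathrm{HK}}(f\circ P_R\circ\Phi^{-1})\le 2^{2d}BR^{d}\exp\{M(\sqrt d R)^k\}$, and combining this with the displayed Koksma--Hlawka estimate completes the proof. The main point to get right is the variation computation on the faces: one must justify the coordinatewise chain rule even though $P_R$ is only $C^1$, verify that the two successive substitutions (through $\Phi^{-1}$ and through $P_R$) are legitimate --- in particular near $\partial[0,1)^d$, where $\Phi^{-1}$ blows up but $P_R\circ\Phi^{-1}$ stays inside the compact box $[-R+\tfrac12,R-\tfrac12]^d$ --- and confirm that no boundary contributions are lost when passing from the Hardy--Krause variation to these integrals.
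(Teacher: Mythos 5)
Your proposal is correct and follows essentially the same route as the paper: apply the Koksma--Hlawka inequality to $f\circ P_R\circ\Phi^{-1}$ together with the almost-sure discrepancy bound, bound $V_{\mathrm{HK}}(f\circ P_R\circ\Phi^{-1})$ by $2^{2d}BR^{d}\exp\{M(\sqrt{d}R)^{k}\}$, then raise to the $q$-th power and take expectations. The only difference is that the paper obtains the pointwise bound by citing Lemma 4.5 of Ouyang et al., whereas you rederive that variation bound directly (chain rule on the coordinatewise maps, cancellation of the $\Phi^{-1}$ Jacobian, and the support/monotonicity of $P_R'$), and your derivation is sound.
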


\begin{proof}
By the lemma 4.5 in \cite{Ouyang2024}, we have
\begin{align}
\left|\widehat{I}_{n}(f\circ P_R)-E\left[f \circ P_{R}(x)\right]\right| 
& \leq V_{\text{HK}}\left(f \circ P_{R} \circ \Phi^{-1}\right)  D_{n}^{\ast}\left\{y_{1} \ldots y_{n}\right\} \notag\\
& \leq 2^{2 d} B R^{d} \exp \left\{M(\sqrt{d} R)^{k}\right\}  \frac{C(\log n)^{d-1}}{n}. \notag
\end{align}
Thus we obtain
\begin{align}
\E\left|\widehat{I}_{n}(f\circ P_R)-E [ f \circ P_{R}(X)]\right|^{q}
& \leq 2^{2qd} B^q R^{qd} \exp \left\{qM(\sqrt{d} R)^{k}\right\}  \frac{C(\log n)^{q(d-1)}}{n^q}. \notag
\end{align}
\end{proof}

Now we turn to the proof of \cref{thm:Lq-convergence}.
\begin{proof}[Proof of \cref{thm:Lq-convergence}]
Using \cref{term:ErrorDecompositionInAppendix} and combining the results of the previous three errors $E_1,E_2$ and $E_3$, for any $ f \in G_{e}\left(M, B, k\right)$ with $0<k<2$ and any $R>\sqrt{2}+1$, we can bound $\E\left|\widehat{I}_{n}(f)-E[f(X)]\right|^{q}$ by
\begin{align}
C_{1}(R-1)^{d-2} \exp \left\{-\frac{1}{4}(R-1)^{2}\right\}+C_{2} R^{q d} \exp \left\{qM(\sqrt{d} R)^{k}\right\} \frac{(\log n)^{q(d-1)}}{n^{q}}, \notag
\end{align}
where the constant $C_1 = 3^{q-1}2C_0$
and $C_2 = 3^{q-1}2^{2qd} B^q C.$

Finally, we end the proof by setting different $R$ for different sample number $n>1$.
When $0<k\leq1$, setting $R=(4 q \log n)^{\frac{1}{2}}+1,$ we get 
\begin{align}
& \E\left|\widehat{I}_{n}(f)-\E[f(X)]\right|^{q} \notag\\
\leq & C_{1}\frac{\left(4q \log n\right)^{\frac{d-2}{2}}}{n^{q}} + {C}_{2}C_3(4 q \log n)^{\frac{q d}{2}} \exp \left\{q M(4 q d \log n)^{\frac{k}{2}}\right\} \frac{(\log n)^{q(d-1)}}{n^{q}} \notag\\
= & \mathcal{O}\left(n^{-q}(\log n)^{q\left(\frac{3 d}{2}-1\right)} \exp \left\{q M(4 q d \log n)^{\frac{k}{2}}\right\}\right),\notag
\end{align}
where ${C}_3$ is from the boundedness of $\frac{f(x+1)}{f(x)}$ for $f(x) = x^{q d} \exp \left\{qM(\sqrt{d} x)^{k}\right\}$.\\
When $1<k<2$, setting $R=(4 q \log n)^{\frac{1}{2}}$, we get
\begin{align}
& \E\left|\widehat{I}_{n}(f)-\E[f(X)]\right|^{q} \notag\\
\leq & C_1R^{d-2}\exp\left\{-\frac{R^2}{4}+\frac{R}{2}-\frac{1}{4}\right\} + C_{2} R^{q d} \exp \left\{q M(\sqrt{d}R)^{k}\right\} \frac{(\log n)^{q(d-1)}}{n^{q}} \notag\\
\leq & {C}_1C_4 R^{d-2}\exp\left\{-\frac{R^2}{4}\right\}\exp\left\{q M(\sqrt{d}R)^{k}\right\} + C_{2} R^{q d} \exp \left\{q M(\sqrt{d}R)^{k}\right\}  \frac{(\log n)^{q(d-1)}}{n^{q}} \notag\\
= & \mathcal{O}\left(n^{-q}(\log n)^{q\left(\frac{3 d}{2}-1\right)} \exp \left\{q M(4 q d \log n)^{\frac{k}{2}}\right\}\right) .\notag
\end{align}
where ${C}_4$ is from the boundedness of $f(x) = \exp\left\{\frac{x}{2}-\frac{1}{4}-qM(\sqrt{d} x)^{k}\right\}$.

Consequently, for $0<k<2$, we obtain that
\begin{equation}
\sup_{f \in G_e(M,B,k)} \E\left[\left\lvert \widehat{I}_{n}(f)-\E[f(X)]\right\lvert^{q}\right]=\mathcal{O}\left(n^{-q}(\log n)^{q\left(\frac{3 d}{2}-1\right)} \exp \left\{q M(4 q d \log n)^{\frac{k}{2}}\right\}\right),\notag 
\end{equation}
where the constant in big-$\mathcal{O}$ bound depends on $M,B,k,d,q$.
\end{proof}


\bibliographystyle{plain}
\bibliography{arXiv_references}

\end{document}